\newcommand{\BB}{{\mathcal  B}}
\newcommand{\DD}{{\mathcal  D}}
\newcommand{\EE}{{\mathcal  E}}
\newcommand{\FF}{{\mathcal  F}}
\newcommand{\II}{{\mathcal  I}}
\newcommand{\LL}{{\mathcal  L}}
\newcommand{\TT}{{\mathcal  T}}
\newcommand{\MM}{{\mathcal  M}}
\newcommand{\WW}{{\mathcal  W}}
\newcommand{\BC}{{\mathbb C}}
\newcommand{\BF}{{\mathbb F}}
\newcommand{\BN}{{\mathbb N}}
\newcommand{\BQ}{{\mathbb Q}}
\newcommand{\BR}{{\mathbb R}}
\newcommand{\BX}{{\mathbb X}}
\newcommand{\fch}{{\mathbf{1}}}
\newcommand{\arrowd}{\mathop{\rightarrow}_d}
\newcommand{\arrowp}{\mathop{\rightarrow}_P}
\newcommand{\arrowdl}{\mathop{\longrightarrow}_d}
\newcommand{\arrowpl}{\mathop{\longrightarrow}_P}
\newcommand{\arroww}{\mathop{\rightarrow}_{w}}
\newcommand{\arrowwl}{\mathop{\longrightarrow}_{w}}
\newtheorem{theorem}{\bf Theorem}[section]
\newtheorem{proposition}[theorem]{\bf Proposition}
\newtheorem{corollary}[theorem]{\bf Corollary}
\theoremstyle{definition}
\newtheorem{definition}[theorem]{Definition}
\newtheorem{example}[theorem]{\bf Example}
\newtheorem{remark}[theorem]{Remark}
\numberwithin{equation}{section}
\begin{document}

\title{Stability of solutions of semilinear evolution equations
with integro-differential operators}
\author {Andrzej Rozkosz and Leszek S\l omi\'nski\\
	{\small Faculty of Mathematics and Computer Science,
		Nicolaus Copernicus University} \\
	{\small  Chopina 12/18, 87--100 Toru\'n, Poland}\\
	{\small E-mail addresses: rozkosz@mat.umk.pl (A. Rozkosz), leszeks@mat.umk.pl (L. S\l omi\'nski)}}
\date{}
\maketitle

\begin{abstract}
We consider solutions of the Cauchy problem for semilinear equations with (possibly) different
L\'evy operators. We provide various results on their  convergence under the assumption that   symbols of the involved operators converge to the symbol of some L\'evy operator. Some results are proved for a more general class of pseudodifferential operators.
\end{abstract}

\footnotetext{{\em Mathematics Subject Classification:}
Primary 35K58, 35R11; Secondary 60H30.}

\footnotetext{{\em Keywords:} L\'evy-type operator, Cauchy problem, semilinear equation, convergence of solutions.
}

\section{Introduction}
\label{sec1}

Let $\varphi:\BR^d\rightarrow\BR$, $f:[0,T]\times\BR^d\times\BR$ be measurable functions  and $u$ be a solution of the semilinear Cauchy problem
\begin{equation}
\label{eq1.1}
\begin{cases}
\partial_su+L u=-f(s,x,u),\quad (s,x)\in Q_T:=(0,T)\times\BR^d,\\
u(T,x)=\varphi(x),\quad x\in\BR^d,
\end{cases}
\end{equation}
where $L$ is a L\'evy-type integro-differential operator of the form
\begin{equation}
\label{eq1.4}
L=\LL+\II.
\end{equation}
Here, for $u\in C^{\infty}_c(\BR^d)$,
\[
\LL u(x)=\sum^d_{i=1}b_i(x)\partial_{x_i}u(x)
+\frac{1}{2}\sum^d_{i,j=1}a_{ij}(x)\partial^2_{x_ix_j}u(x),
\qquad a=\sigma\sigma^T,
\]
for some  bounded Lipschitz continuous coefficients $\sigma:\BR^d\rightarrow\BR^d\times\BR^d$, $b:\BR^d\rightarrow\BR^d$, and
\[
\II u(x)=\int_{\BR^d}(u(x+y)-u(x)-y\cdot\nabla u(x)
\fch_{|y|\le1})\,\nu(dy)
\]
for some L\'evy measure $\nu$ on $\BR^d$. Under these assumptions on $\sigma,b,\nu$  the operator $L$ generates a conservative Feller semigroup such that $C^{\infty}_c(\BR^d)\subset D(L)$.

Suppose that $\{L^n=\LL^n+\II^n\}$ is  a sequence of  operators of the same form as $L$ such that each $L^n$ generates a Feller semigroup  and for each $n$ there exists a unique solution of (\ref{eq1.1}) with $L$ replaced by $L^n$.
The aim of the present paper is to show that
if $L^n\rightarrow L$ in the sense described by (\ref{eq1.2}) below, then
\begin{equation}
\label{eq1.3}
u_n\rightarrow u,
\end{equation}
and the form of convergence  (pointwise, almost everywhere, weak in $L^2$, etc.) depends on the assumptions on the data $\varphi$ and $f$. We are mainly  interested in the  case where the nonlocal parts $\II^n$ are present. The model  simple example is
\begin{equation}
\label{eq1.5}
L=\Delta,\qquad L^n=\II^n=\Delta^{\alpha_n/2}\quad\mbox{with }\alpha_n\rightarrow 2^{-}
\end{equation}
(the fractional Laplace operator  $\Delta^{\alpha_n/2}:=-(-\Delta)^{\alpha_n/2}$ corresponds to the L\'evy measure $\nu_n(dy)=c_{d,\alpha_n}|y|^{-d-\alpha_n}\,dy$ for some constant $c_{d,\alpha_n}>0$). This model example  was recently addressed in \cite{XCV}. In that paper it is proved that under some assumptions on $\varphi,f$ the convergence (\ref{eq1.3}) holds  weakly star in $L^{\infty}(0,T;L^2(\BR^d))$ and weakly in $L^2(0,T;L^2(\BR^d))$.

In the present paper,  we consider much more general situation and give stronger convergence results even in the model example (\ref{eq1.5}) considered in \cite{XCV}. To describe them it is convenient to regard $L,L^n$ as  pseudodifferential operators. Since $C^{\infty}_c(\BR^d)\subset D(L)$, by Courr\`ege theorem, $L$ restricted
to $C^{\infty}_c(\BR^d)$ has the form
\[
Lu(x)=-p(x,D)u(x):=-\int_{\BR^d}e^{i(x,\xi)}p(x,\xi)\hat u(\xi)\,d\xi,
\quad u\in C^{\infty}_c(\BR^d),
\]
with  symbol $p(x,\xi):\BR^d\times\BR^d\rightarrow\BC$ defined by
\begin{equation}
\label{eq1.6}
p(x,\xi)=-i(b(x),\xi)+\frac12(a(x)\xi,\xi)+\psi(\xi),\quad x,\xi\in\BR^d,
\end{equation}
where $\psi:\BR^d\rightarrow\BC$ is the characteristic exponent of the pure-jump L\'evy process with L\'evy measure $\nu$, i.e. $\psi$ is given by the L\'evy--Khinchine formula
\[
\psi(\xi)=\int_{y\neq0}(1-e^{i(\xi,y)}+i(\xi,y)\fch_{\{|y|\le1\}})\,\nu(dy).
\]
In general, by a solution of (\ref{eq1.1}) we mean a probabilistic solution, i.e. a function $u:[0,T)\times\BR^d$ which satisfies the following ``nonlinear Feynman--Kac formula":
\begin{equation}
\label{eq1.7}
u(s,x)=E\Big(\varphi(X^{s,x}_T)
+\int^T_sf(t,X^{s,x}_t,u(r,X^{s,x}_t))\,dt\Big),\quad (s,x)\in[0,T)\times\BR^d,
\end{equation}
where $X^{s,x}$ is the jump-diffusion process with generator $L$ starting at time $s$ from $x$.

Let $p_n$ be the symbol of $L^n$, which is defined by (\ref{eq1.6}) with $a,b,\psi$ replaced by some $a^n,b^n$ and $\psi_n$. We assume that $a^n,b^n$ are uniformly bounded and Lipschitz continuous, and
\begin{equation}
\label{eq1.2}
a^n\rightarrow a,\,b^n\rightarrow b\mbox{ locally uniformly on }\BR^d,\quad \psi_n\rightarrow\psi\mbox{ pointwise in }\BR^d.
\end{equation}
As for $f$, we assume that $(x,y)\rightarrow f(t,x,y)$ is continuous, $y\rightarrow f(t,x,y)$ is nonincreasing (in fact this condition can be somewhat relaxed),  $|f(t,x,y)-f(t,x,0)\le h(t,x) H(|y|)$ for some nonnegative $h$ and nonnegative nondecreasing $H$, $f(t,x,0)$ is bounded  or satisfies some integrability conditions. Examples are
\begin{equation}
\label{eq1.8}
f(t,x,y)=h_1(t,x)+h_2(t,x)g(y),\quad g(y)=-y|y|^{p-1}\quad\mbox{or}\quad g(y)=1-e^y,
\end{equation}
where 
$h_1,h_2$ are continuous in $x$, $h_2$ is bounded and nonnegative and $h_1$ is bounded or satisfies some integrability condition.
We show that then, if $\varphi$ is bounded continuous and  $f(\cdot,\cdot,0)$ is bounded, then the probabilistic solutions $u_n$ of (\ref{eq1.1}) with $L$ replaced by $L^n$ (they exist and are unique) converge pointwise to the probabilistic solution  $u$.

The assumptions on $\varphi,f(\cdot,\cdot,0)$ can be relaxed if $a^n,b^n$ are constant, i.e. if $L^n$ are L\'evy operators, and, roughly speaking, $\psi_n$ satisfy the Hartman--Wintner condition uniformly in $n$. One can then show that some form of convergence (\ref{eq1.3}) holds for $\varphi,  f(\cdot,\cdot,0)$ satisfying  integrability conditions. Furthermore, for  such subclass of operators
$u_n,u$ become weak solutions (for $L^2$-data) or renormalized solutions (for $L^1$-data) and we get results on stability of weak or renormalized solutions.

In the present paper, we also consider operators with symbols of the form  (\ref{eq1.6}) with $\psi(\xi)$ replaced by $\psi(\gamma^T\xi)$ for some bounded Lipschitz $\gamma:\BR^d\rightarrow\BR^{d\times k}$. This means that we allow  operators (\ref{eq1.4}) with the integral part defined as $\II$, but with the measure $\nu$ depending  in a suitable way on $x$. As an example may serve the operator
\[
Lu(x)=|\gamma(x)|^{\alpha}\Delta^{\alpha/2}u(x),\quad u\in C^{\infty}_c(\BR^d).  
\]
For such operators we prove some convergence results but only for bounded continuous $\varphi$ and bounded $f(\cdot,\cdot,0)$.

To prove our results we use probabilistic methods. The solution $u_n$ associated with $L^n$ is given by (\ref{eq1.7}) with $X^{s,x}$ replaced by the process $X^{s,x,n}$ with generator $L^n$. Therefore, roughly speaking, the proof of (\ref{eq1.3}) reduces to the proof that the right-hand side of formula (\ref{eq1.7}) representing $u_n$ (it involves $u_n$ and $X^{s,x,n}$) converges in a suitable way to the right-hand side of (\ref{eq1.7}).
To this end, we show the convergence of solutions of backward stochastic differential equations (BSDEs) associated with the Cauchy problem involving $L^n$ to the solution of BSDE associated with (\ref{eq1.1}).
In fact, we show some abstract, more general than needed stability result for solution of BSDEs, which may be of independent interest.
It is worth noting that our stability result pertains to the BSDEs with different filtrations (generated by the processes $X^{s,x,n}$ in our applications) and its proof uses in an essential way the notion of weak convergence of filtrations introduced in \cite{CMS}.

\section{Stability of solutions of  BSDEs}
\label{sec2}

In  proofs of our main theorems  we  will need a stability result for solutions of
Markov-type backward stochastic differential equations (BSDEs in abbreviation)  with forward driving processes associated with operators $L^n$.  However, it is convenient to state it for more general abstract  BSDEs.

Let $(\Omega,\FF,\BF=(\FF_t)_{t\ge0},P)$ be a filtered probability space satisfying the usual conditions.
We denote by
$\DD$  the space of all $\BF$-progressively measurable c\`adl\`ag processes.
$\MM$ (resp. $\MM_{loc}$) is the space of all c\`adl\`ag
$(\BF,P)$-martingales (resp. local martingales) $M$ such that
$M_0=0$.

Recall that a  c\`adl\`ag $\BF$-adapted process $Y$ is of
Doob class (D)  if the collection $\{Y_{\tau},\tau\in\TT\}$,
where $\TT$ is the set a finite valued $\BF$-stopping times,
is uniformly integrable.

In what follows $T>0$ is an arbitrary but fixed number, $\xi$ is an $\FF_T$-measurable random
variable, $X$ is a process of class $\DD$ and $f:[0,T]\times\BR^d\times\BR\rightarrow\BR$ is a
measurable function.
As for $\xi$ and $f$ we will need the following hypotheses.

\begin{enumerate}
\item[(H1)]$\BR^d\times\BR\ni(x,y)\mapsto f(t,x,y)$ is continuous for every $t\in[0,T]$.

\item[(H2)]There is $\mu\ge0$ such that $(f(t,x,y)-f(t,x,y'))(y-y')\le\mu|y-y'|^2$ for all $t\in[0,T]$, $x\in\BR^d$ and $y,y'\in\BR$.

\item[(H3)]$E\int^T_0F_r(t,X_t)\,dt<\infty$ for every $r>0$,  where   $F_r(t,x)=\sup_{|y|\le r}|f(t,x,y)-f(t,x,0)|$.

\item[(H4)]$E|\xi|+E\int^T_0|f(t,X_t,0)|\,dt<\infty$.
\end{enumerate}

\begin{definition}
A pair $(Y,M)$ of processes on $[0,T]$ is a solution of $\mbox{BSDE}(\xi,X,f,\BF)$ if $Y\in\DD$, $Y$ is of class (D), $M\in\MM_{loc}$ and
\begin{equation}
\label{eq2.17}
Y_t=\xi+\int^T_tf(s,X_s,Y_s)\,ds-\int^T_tdM_s,\quad t\in[0,T],\,P\mbox{-a.s.}
\end{equation}
\end{definition}

\begin{theorem}
\label{th2.2}
If \mbox{\rm(H1)--(H4)} are satisfied, then there exists  a unique solution $(Y,M)$ of $\mbox{BSDE}(\xi,X,f,\BF)$ such that
$M$ is a uniformly integrable martingale.
\end{theorem}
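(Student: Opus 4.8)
The plan is to establish existence and uniqueness separately, with uniqueness first since it is the easier half and motivates the a priori estimates needed for existence. For uniqueness, suppose $(Y^1,M^1)$ and $(Y^2,M^2)$ are two solutions of class (D) with $M^i$ uniformly integrable. Put $\bar Y=Y^1-Y^2$, $\bar M=M^1-M^2$. Apply the It\^o--Meyer change-of-variables formula to $e^{-2\mu t}|\bar Y_t|^2$ (or to $|\bar Y_t|^2$ directly after absorbing the monotonicity constant), using that $\bar Y$ has zero jump at $T$ and $\bar Y_T=0$; the drift term produced is $2e^{-2\mu s}\bar Y_s(f(s,X_s,Y^1_s)-f(s,X_s,Y^2_s))\,ds$, which by (H2) is dominated by $2\mu e^{-2\mu s}|\bar Y_s|^2\,ds$, exactly cancelling the contribution of the $e^{-2\mu t}$ factor. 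Taking conditional expectations and using that $\int e^{-2\mu s}\bar Y_{s-}\,d\bar M_s$ is a genuine martingale (here the uniform integrability of $M^i$ and class (D) of $Y^i$ are used, together with a standard localization), one gets $E e^{-2\mu t}|\bar Y_t|^2 + E\int_t^T e^{-2\mu s}\,d[\bar M]_s\le 0$, hence $\bar Y\equiv 0$ and then $\bar M\equiv 0$.

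For existence, the natural route is a Picard-type iteration combined with a truncation of $f$ in the $y$ variable. First replace $f(t,x,y)$ by $f(t,x,0)$ plus the truncated increment $\bar f$ capped at level $r$; for a globally Lipschitz (in $y$) generator with $E\int_0^T|f(s,X_s,0)|\,ds<\infty$ and $E|\xi|<\infty$, existence and uniqueness of a class (D) solution with uniformly integrable martingale part follows from the contraction/fixed-point argument of Pardoux--Peng type carried out in the filtered (non-Brownian) setting — one builds $Y^{k+1}_t=E(\xi+\int_t^T f_r(s,X_s,Y^k_s)\,ds\mid\FF_t)$, identifies $M^{k+1}$ from the Doob--Meyer / martingale representation of the right side as a function of $t$, and shows the map $Y^k\mapsto Y^{k+1}$ is a contraction on the space of class (D) processes under a norm like $E\sup_t e^{\beta t}|Y_t|$ for $\beta$ large. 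Then one derives an a priori bound showing $\|Y\|_\infty$ (or rather $E\sup_t|Y_t|$) is controlled by $E|\xi|+E\int_0^T|f(s,X_s,0)|\,ds$ independently of the truncation level $r$: again apply It\^o to $|Y_t|$ or $|Y_t|^2$, use (H2) with $y'=0$ to get $Y_s f(s,X_s,Y_s)\le \mu|Y_s|^2 + |Y_s||f(s,X_s,0)|$, and close the estimate with Gronwall. Finally, since the bound is uniform in $r$, for $r$ large enough the truncated solution solves the untruncated equation, giving a solution of $\mathrm{BSDE}(\xi,X,f,\BF)$; hypotheses (H1) and (H3) guarantee the nonlinearity is well-behaved (continuity, local integrability of the increments) so that all the stochastic integrals and conditional expectations make sense and the limit passage in the Picard scheme is legitimate.

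I expect the main obstacle to be the existence step in the abstract filtered setting: unlike the classical Brownian BSDE, here there is no a priori martingale representation theorem, so one must work with the Doob--Meyer decomposition of the supermartingale $t\mapsto E(\xi+\int_0^T f(s,X_s,Y_s)\,ds\mid\FF_t)$ (or the closed martingale of $Y_t+\int_0^t f\,ds$) to produce the martingale part $M$, and one must verify carefully that class (D) is preserved and that $M$ is in fact uniformly integrable rather than merely local — this is where (H4) is essential. A secondary technical point is justifying the It\^o formula and the vanishing of stochastic-integral expectations for c\`adl\`ag local martingales against possibly unbounded integrands, which requires a localization-plus-uniform-integrability argument rather than a one-line appeal. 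The continuity hypothesis (H1) and the one-sided Lipschitz/monotonicity hypothesis (H2) together with the growth control (H3) are exactly what make the fixed-point and a priori estimates go through; I would organize the proof so that (H2) is invoked only through the two inequalities $\bar Y_s(f(s,X_s,Y^1_s)-f(s,X_s,Y^2_s))\le\mu|\bar Y_s|^2$ and $Y_s(f(s,X_s,Y_s)-f(s,X_s,0))\le\mu|Y_s|^2$.
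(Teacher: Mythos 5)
The paper does not actually prove this theorem: it simply cites \cite[Theorem 3.6]{KR:CM} for existence and \cite[Corollary 3.2]{KR:JFA} for uniqueness, so the comparison below is with the techniques those results (and the rest of this paper) rely on. Measured against them, your sketch has genuine gaps. The central one is that you run $L^2$ arguments in an $L^1$/class (D) setting. Under (H4) one only has $\xi\in L^1$ and $f(\cdot,X_\cdot,0)\in L^1(dt\otimes P)$, $Y$ is only of class (D), and $M$ is only a uniformly integrable martingale, not square-integrable. Hence in your uniqueness step the quantities $E|\bar Y_t|^2$ and $E[\bar M]_T$ need not be finite, and after localization the passage to the limit requires uniform integrability of $|\bar Y_{\tau_k}|^2$, which class (D) does not provide (it gives uniform integrability of $|\bar Y_\tau|$, not of its square). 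The argument that actually works here is the Meyer--Tanaka formula applied to $|\bar Y|$ together with (H2) and the class (D) property — precisely the $L^1$ technique the paper itself uses in Steps 2 and 3 of the proof of Theorem \ref{th2.4} and in Corollary \ref{cor3.5}, and which underlies the cited uniqueness result.

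The existence half has the same defect plus a structural one: under (H1)--(H2) the generator is only continuous and one-sidedly monotone in $y$, not Lipschitz, so the Picard map is not a contraction, and truncating the increment $\bar f$ at a level $r$ does not make it Lipschitz. The standard remedy (used in the cited \cite{KR:CM} and mirrored in Step 2 of the proof of Theorem \ref{th2.4}) is the Lepeltier--San Martin inf-convolution $f_m(t,x,y)=\inf_z\{m|y-z|+f(t,x,z)\}$, which is globally Lipschitz in $y$, increases to $f$, and permits a monotone-stability passage to the limit; (H3) is exactly what makes that limit passage legitimate. The $L^1$ data are then handled by truncating $\xi$ and $f(\cdot,\cdot,0)$ at level $k$ and controlling the error through conditional-expectation bounds of the type (\ref{eq2.15}) and $E\sup_t|\cdot|^\beta$ estimates with $\beta\in(0,1)$ (as in \cite[Lemma 6.1]{BDHPS}), not through a Gronwall bound on $E\sup_t|Y_t|$, which need not be finite for a uniformly integrable martingale. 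Finally, your claim that for $r$ large the truncated solution already solves the untruncated equation fails: with $L^1$ data the solution is in general unbounded, so no finite truncation level suffices and one must genuinely pass to the limit with estimates uniform in the approximation index. Your remark that the martingale part must be produced from the Doob--Meyer decomposition of the closed martingale $t\mapsto E(\xi+\int_0^T f(s,X_s,Y_s)\,ds\,|\,\FF_t)$, rather than from a representation theorem, is correct and is indeed the right mechanism in this abstract filtration setting.
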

\begin{proof}
See \cite[Theorem 3.6]{KR:CM} for the existence part and \cite[Corollary 3.2]{KR:JFA} for uniqueness. 
\end{proof}

Suppose now that for each $n\ge1$  we are given a filtration $\BF^n=(\FF^n_t)_{t\ge0}$  such that $\FF^n_t\subset\FF$ for $t\ge0$, an $\FF^n_T$-measurable random variable $\xi^n$ and a process $X^n$ of class $\DD$ with respect to $\BF^n$. Suppose also that for each $n\ge1$ there exists a unique solution $(Y^n,M^n)$  of $\mbox{BSDE}(\xi^n,X^n,f,\BF^n)$, i.e.
\begin{equation}
\label{eq2.18}
Y^n_t=\xi^n+\int^T_tf(s,X^n_s,Y^n_s)\,ds-\int^T_tdM^n_s,\quad t\in[0,T].
\end{equation}
In our stability result a key role will be played by the concept of weak convergence of filtration introduced in \cite{CMS}. To state it we denote by
$D([0,T];\BR^d)$ the space of $\BR^d$-valued functions on $[0,T]$ which are right continuous, with left limits. We equip it with the Skorokhod $J_1$-topology.
We use ``$\arrowp$" to denote the convergence in probability $P$.

The following definition is taken from \cite{CMS}. It is a version of the definition introduced in \cite{H}.

\begin{definition}
We say that $\{\BF^n\}_{n\ge1}$ converges weakly to $\BF$ on $[0,T]$ (and we write $\BF^n\arroww\BF$) if for every $B\in\FF_T$,
\[
E(\fch_B|\FF^n_{\cdot})
\arrowpl E(\fch_B|\FF_{\cdot})\quad\mbox{in }D([0,T];\BR).
\]
\end{definition}

In the stability theorem our basic assumption on the processes associated with  operators $L^n$ is the following:
\begin{enumerate}
\item[(C)] $X^n\arrowp X$ in $[D([0,T];\BR)]^d$ and $\BF^n\arroww\BF$ on $[0,T]$.
\end{enumerate}

\begin{remark}
\label{rem2.4}
(i) By \cite[Remark 1]{CMS}, $\BF^n\arroww\BF$ on $[0,T]$ if and only if $E(X|\FF^n_{\cdot})\rightarrow E(X|\FF_{\cdot})$ in $D([0,T];\BR)$ for every $\FF_T$-measurable integrable random variable.
\smallskip\\
(ii) From part (i) it follows that if $X$ is an $\FF_T$-measurable integrable random variable, $X_n\rightarrow X$ in $L^1(P)$ and $\BF^n \arroww\BF$ on $[0,T]$, then $E(X_n|\FF^n_{\cdot})\arrowp E(X|\FF_{\cdot})$ in $D([0,T];\BR)$.
\\
(iii) In \cite{CMS} it is shown that for some interesting classes of processes with if $X^n\arrowp X$ in $[D([0,T];\BR)]^d$, then $\BF^n \arroww\BF$ on $[0,T]$. For instance, by \cite[Proposition 2]{CMS}, this holds true if the processes $X^n$ have independent increments.
\end{remark}

We will also need the following assumptions relating $\{X^n\}$ to $f$ and $\varphi$ (below $dt$ denotes the Lebesgue measure on $(0,T)$):

\begin{enumerate}
\item[(H5)]For every $r>0$ the sequence $\{F_r(\cdot,X^n_{\cdot})\}_{n\ge1}$ is uniformly $dt\otimes P$-integrable on $(0,T)\times\Omega$.

\item[(H6)]$\{f(\cdot,X^n,0)\}_{n\ge1}$ is uniformly $dt\otimes P$-integrable on $(0,T)\times\Omega$.

\item[(H7)]$\xi^n\longrightarrow\xi$ in $L^1(P)$, where $\xi^n=\varphi(X^n)$, $\xi=\varphi(X)$.
\end{enumerate}

It is well known 
that the study of backward equations with $f$ satisfying (H2) can be reduced by a simple transformation of variables to the study of equations satisfying (H2) with $\mu=0$.  For the reader's convenience, we describe the transformation below.

\begin{remark}
\label{rem2.5}
Let $\tilde\xi=e^{\mu T}\xi$,
 $\tilde f(t,x,y)=e^{\mu t}f(t,x,e^{-\mu t}y)-\mu y$.\smallskip\\
(i) If $\xi,f$ satisfy (H1), (H4), then clearly $\tilde\xi,\tilde f$ satisfy these hypotheses. If $f$ satisfies (H2), then $\tilde f$ satisfies (H2) with $\mu=0$.
Define $\tilde F_r$ as $F_r$, but with $f$ replaced by $\tilde f$. Then
$\tilde F(t,x)\le e^{\mu t}F_{r'}(t,x)+\mu r$ with $r'=e^{-\mu t}r$, so if $f$ satisfies (H3). then $\tilde f$ satisfies (H3).
\smallskip\\
(ii) An application of It\^o's formula shows that  $(Y,M)$ is a solution of (\ref{eq2.17}) if and only if  $(\tilde Y_t,\tilde M_t)):=(e^{\mu t}Y_t, e^{\mu t}M_t)$, $t\in[0,T]$, is a solution of  $\mbox{BSDE}(\tilde\xi,X,\tilde f,\BF)$.
\smallskip\\
(iii) If $f$ satisfies (H5) (resp. (H6)), then $\tilde f$ satisfies (H5) (resp. (H6)).
\end{remark}

\begin{theorem}
\label{th2.4}
Assume that \mbox{\rm(C)} and \mbox{\rm(H1), (H2), (H5)--(H7)} are satisfied. Then
\begin{equation}
\label{eq2.1}
(Y^n,M^n)\arrowpl (Y,M)\quad\mbox{in }D([0,T];\BR^2).
\end{equation}
\end{theorem}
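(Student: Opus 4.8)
The plan is to first reduce to the case $\mu=0$ via the transformation of Remark~\ref{rem2.5}: replacing $(\xi^n,f)$ by $(\tilde\xi^n,\tilde f)$ and $(\xi,f)$ by $(\tilde\xi,\tilde f)$ preserves (H1)--(H7) and (C), and by Remark~\ref{rem2.5}(ii) the convergence $(\tilde Y^n,\tilde M^n)\arrowp(\tilde Y,\tilde M)$ in $D([0,T];\BR^2)$ is equivalent (continuous maps, deterministic factor $e^{\mu t}$) to \eqref{eq2.1}. So from now on I assume $\mu=0$, i.e.\ $f$ is nonincreasing in $y$.

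Next I would set up the approximation machinery. Fix $k>0$ and truncate the nonlinearity: let $f_k(t,x,y)=f(t,x,0)+\bar f(t,x,(y\wedge k)\vee(-k))$ where $\bar f(t,x,y)=f(t,x,y)-f(t,x,0)$, and also truncate the data $\xi^n$ at level $k$. Using (H3)--(H4) and Theorem~\ref{th2.2} one obtains for each $k,n$ a unique solution $(Y^{n,k},M^{n,k})$ with bounded driver-increment and $L^1$ terminal data. The key a priori estimate, obtained by applying It\^o's formula to $|Y^n_t|$ (or a smooth approximation) and using (H2) with $\mu=0$ together with (H5), (H6), is a uniform bound of the form
\[
\sup_n E\sup_{t\le T}|Y^n_t| + \sup_n E\Big(\int_0^T d\langle M^n\rangle_t\Big)^{1/2} \le C,
\]
and, more importantly, a uniform-integrability statement: the family $\{\sup_{t\le T}|Y^n_t|\}_n$ is uniformly integrable, and the truncation error $\sup_t|Y^n_t-Y^{n,k}_t|\to 0$ in $L^1$ uniformly in $n$ as $k\to\infty$. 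This reduces the problem to proving $(Y^{n,k},M^{n,k})\arrowp(Y^k,M^k)$ for each fixed $k$, then letting $k\to\infty$.

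For fixed $k$, the driver $f_k$ is bounded in the $y$-variable and continuous, so the solution admits the representation $Y^{n,k}_t=E\big(\xi^{n,k}+\int_t^T f_k(s,X^n_s,Y^{n,k}_s)\,ds\,\big|\,\FF^n_t\big)$. This is where the weak convergence of filtrations enters decisively. The argument is a fixed-point/compactness one: from the uniform bounds the laws of $(X^n,Y^{n,k},M^{n,k})$ are tight in $D([0,T];\BR^d)\times D([0,T];\BR)\times D([0,T];\BR)$ (tightness of $M^{n,k}$ from the bracket bound and Aldous' criterion); along a subsequence one gets a limit, one identifies the limit martingale and passes to the limit in the conditional-expectation representation using (C) — specifically Remark~\ref{rem2.4}(ii), which gives $E(\zeta_n|\FF^n_\cdot)\arrowp E(\zeta|\FF_\cdot)$ whenever $\zeta_n\to\zeta$ in $L^1$ — together with the continuity of $f_k$ in $(x,y)$, (H5)/(H6) for uniform integrability of $\int f_k(s,X^n_s,Y^{n,k}_s)\,ds$, and the a.s.\ convergence $X^n\to X$ after Skorokhod coupling. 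The limit then satisfies $\mbox{BSDE}(\xi^k,X,f_k,\BF)$, and by the uniqueness part of Theorem~\ref{th2.2} it must be $(Y^k,M^k)$; since every subsequence has a further subsequence converging to the same limit, the whole sequence converges in probability.

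The main obstacle, and the technical heart of the proof, is the passage to the limit in the conditional expectation $E(\,\cdot\,|\FF^n_t)$ as a process in $t$: one must combine the $L^1$-convergence of the integrands (which itself requires controlling the nonlinear term $\int_t^T f_k(s,X^n_s,Y^{n,k}_s)\,ds$, hence a preliminary identification of the limit of $Y^{n,k}$) with the weak convergence of filtrations, and do so in the $J_1$-topology rather than pointwise in $t$. Handling the nonlinearity requires a bootstrap: first show tightness and extract a candidate limit, show the limit of the $Y$-component is continuous at the relevant points, then use (H1) and uniform integrability to pass $f_k(s,X^n_s,Y^{n,k}_s)\to f_k(s,X_s,Y_s)$ inside the integral, and only then close the loop via Remark~\ref{rem2.4}(ii). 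A secondary difficulty is verifying that the truncation-in-$k$ error is controlled uniformly in $n$, which is exactly what (H5) is designed to deliver; the comparison-type estimate for BSDEs (monotonicity in $\xi$ and in $f$) gives $\sup_t|Y^{n,k}_t-Y^n_t|\le E\big(|\xi^{n,k}-\xi^n|+\int_0^T|f_k(s,X^n_s,Y^n_s)-f(s,X^n_s,Y^n_s)|\,ds\,\big|\,\FF^n_\cdot\big)$, whose right-hand side is small in $L^1$ uniformly in $n$ once $\{\sup_t|Y^n_t|\}_n$ is uniformly integrable and (H5) holds.
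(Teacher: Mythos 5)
Your overall architecture (reduce to $\mu=0$, truncate, prove the result for the truncated problem, control the truncation error uniformly in $n$) resembles the paper's, but the core of your argument for fixed $k$ — tightness of the laws of $(X^n,Y^{n,k},M^{n,k})$, subsequence extraction, identification of the limit, uniqueness — is a genuinely different route from the paper's, and as sketched it has real gaps. First, a compactness-in-law argument delivers convergence in distribution, while the theorem asserts convergence in probability on the given space; you never explain how to upgrade (this is not automatic, since $Y^{n,k}$ is not a continuous functional of $X^n$). Second, the identification step is precisely where the difficulty sits: in the representation $Y^{n,k}_t=E(\xi^{n,k}+\int_t^Tf_k(s,X^n_s,Y^{n,k}_s)\,ds\,|\,\FF^n_t)$ the unknown appears inside the conditional expectation, so Remark \ref{rem2.4}(ii) cannot be applied directly, and you must also show that the limiting martingale part is a martingale with respect to the original filtration $\BF$; saying "bootstrap" does not resolve this. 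The paper's device for exactly this point is a Picard iteration combined with a Lipschitz inf-convolution $f_m(t,x,y)=\inf_z\{m|y-z|+f(t,x,z)\}$: each Picard iterate is a conditional expectation of a random variable not involving the unknown at the same stage, so Remark \ref{rem2.4}(ii) applies and yields convergence in probability with the correct filtration, while the Lipschitz constant makes the iterates approximate $Y^{n,(m)}$ uniformly in $n$. You never introduce a Lipschitz approximation, and with $f$ only continuous and monotone your fixed-point/identification scheme is not closed.

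Two further concrete problems. Your a priori bound $\sup_nE\sup_{t\le T}|Y^n_t|+\sup_nE\langle M^n\rangle_T^{1/2}\le C$ is not available under (H5)--(H7), which are $L^1$-type hypotheses; the paper only obtains $E\sup_t|\cdot|^\beta$ estimates for $\beta\in(0,1)$ via \cite[Lemma 6.1]{BDHPS}, and this is why that lemma is used at every stage. Also, your truncation $f_k(t,x,y)=f(t,x,0)+\bar f(t,x,T_k(y))$ produces an error $|f(s,X^n_s,Y^n_s)-f_k(s,X^n_s,Y^n_s)|$ supported on $\{|Y^n_s|>k\}$ and of size up to $F_{|Y^n_s|}(s,X^n_s)+F_k(s,X^n_s)$, which (H5) (uniform integrability of $F_r(\cdot,X^n_\cdot)$ for each fixed $r$) does not control, since $\bar f$ is not assumed bounded in Theorem \ref{th2.4}. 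The paper avoids this by ordering the steps differently: it first works under bounded data (so that $|Y^n|,|Y^{n,(m)}|\le C$ and the $y$-approximation error is bounded by $F_C(\cdot,X^n_\cdot)+3C$, handled by (H5)), and removes the boundedness only at the end by truncating $\xi^n$ and the zero-order term $f(\cdot,\cdot,0)$ (leaving the $y$-dependence untouched), which is exactly what (H6)--(H7) control.
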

\begin{proof}
By Remark \ref{rem2.5}, without loss of generality we may and will assume that $\mu=0$. Note also that (H5), (H6) together with (H1), (C) imply (H3), (H4).\\
{\em Step 1}. We assume additionally that
\begin{equation}
\label{eq2.2}
\sup_{n\ge1}\Big(|\xi_n|+\sup_{0\le t\le T}|f(t,X^n_t,0)|
+\sup_{0\le t\le T}|f(t,X_t,0)|\Big)\le C
\end{equation}
for some $C\ge0$ and there is $L\ge0$ such that
\begin{equation}
\label{eq2.3}
|f(t,x,y)-f(t,x,y')|\le L|y-y'|,\quad t\in[0,T],\, x\in\BR^d,\,y,y'\in\BR.
\end{equation}
The proof of the theorem under these assumptions is a modification of the proof of \cite[Theorem 4]{CMS} in the case where, in the notation adopted in \cite{CMS},
$A^n_s=A_s=s$, $X^n=\xi_n,X=\xi$ and $ g^n_s(\cdot)=f(s,X^n_s,\cdot),
g_s(\cdot)=f(s,X_s,\cdot)$.
As compared with the proof of \cite[Theorem 4]{CMS}, in the present case we do not know that $g^n\rightarrow g$ uniformly in $s$ and $x$.
For equations (\ref{eq2.17}) and (\ref{eq2.18}) we consider  Picard approximations of the form
\[
U^0_t=0,\qquad U^k_t=E\Big(\xi+\int^T_tf(s,X_s,U^{k-1}_s)\,ds\,\big|\,\FF_t\Big),
\]
\[
U^{n,0}_t=0,\qquad U^{n,k}_t=E\Big(\xi_n+\int^T_tf(s,X^n_s,U^{n,k-1}_s)\,ds\,\big|\,\FF^n_t\Big),
\quad t\in[0,T],\,k\ge1.
\]
By \cite[Theorem 2.4]{A} and (\ref{eq2.2}), (\ref{eq2.3}) we have
\[
E\int^T_0|U^{n,k+1}_s-U^{n,k}_s|\,ds\le \frac{(LT)^{k+1}}{(k+1)!}E\int^T_0|U^{n,1}_s|\,ds
\le\frac{(LT)^{k+1}}{(k+1)!}\cdot CT,
\]
which implies that
\[
E\int^T_0|Y^n_s-U^{n,k}_s|\,ds\le \sum^{\infty}_{p=k+1}\frac{(LT)^{k+1}}{(k+1)!}\cdot CT,
\quad n,k\ge1.
\]
As in the proof of \cite[(9)]{CMS}, we deduce from the above inequality that  for every $\varepsilon>0$,
\begin{equation}
\label{eq2.19}
\lim_{k\rightarrow\infty}\sup_{n\ge1}
P(\sup_{t\le T}|Y^n_t-U^{n,k}_t|>\varepsilon)=0.
\end{equation}
In much the same way we obtain
\begin{equation}
\label{eq2.20}
\sup_{t\le T}|Y_t-U^k_t|\arrowp0.
\end{equation}
We next show that be the  weak convergence of filtrations we have
\begin{equation}
\label{eq2.21}
U^{n,k}\arrowpl U^k\quad\mbox{in }D([0,T];\BR),\quad k\ge1.
\end{equation}
To see this, we first observe that by (C) and (\ref{eq2.2}),
\[
E\Big|\int^T_0f(t,X^n_t,0)\,dt-\int^T_0f(t,X_t,0)\,dt\Big|\rightarrow0.
\]
Therefore, by Remark \ref{rem2.4}(ii),
\[
E\Big(\xi_n+\int^T_0f(s,X^n_s,0)\,ds\,\big|\,\FF^n_{\cdot}\Big)
\arrowpl E\Big(\xi+\int^T_0f(s,X_s,0)\,ds\,\big|\,\FF_{\cdot}\Big)
\quad\mbox{in }D([0,T];\BR),
\]
which means that $U^{n,1}\arrowp U^1$ in $D([0,T];\BR)$. We next show by induction that for every $k\ge1$ the convergence $U^{n,k-1}\arrowp U^{k-1}$ implies (\ref{eq2.21}), which together with (\ref{eq2.19}) and (\ref{eq2.20})  shows that
\begin{equation}
\label{eq2.11}
Y^n\arrowpl Y\quad\mbox{in }D([0,T];\BR^d).
\end{equation}
To show the joint convergence (\ref{eq2.1}) we first note that by (\ref{eq2.11}),  $\{\sup_{t\leq T}|Y^n_t|\}$ is bounded in probability.	Moreover, by using  (\ref{eq2.2})  and (\ref{eq2.3}) we get
\[
h^n:=\sup_{s\leq T}|f(s,X^n_s,Y^{n}_s)-f(s,X_s,Y_s)|\leq 2C+L(\sup_{t\leq T}|Y^n_t|+\sup_{t\leq T}|Y_t|)
\]
for $n\ge1$, which implies that  the sequence $\{h^n\}$  is also bounded in probability. By this, (H5)  and (\ref{eq2.11}),
\begin{align}\nonumber
\sup_{t\le T} &\Big|\int_0^tf(s,X^n_s,Y^n_s)\,ds
-\int^t_0f(s,X_s,Y_s)\,ds\Big|\\
&\qquad\leq\int_0^T|f(s,X^n_s,Y^n_s)-f(s,X_s,Y_s)|\,ds\arrowpl0.
\label{eq2.212}
\end{align}
Since
	\[
	M_t=Y_t-Y_0+\int^t_0f(s,X_s,Y_s)\,ds,\quad M^n_t=Y^n_t-Y^n_0+\int^t_0f(s,X^n_s,Y^n_s)\,ds,
	\]
it is clear that the joint convergence (\ref{eq2.1}) follows from (\ref{eq2.11}) and (\ref{eq2.212}).
\\
{\em Step 2}. We shall show how to dispense with the assumption (\ref{eq2.3}) (but we maintain (\ref{eq2.2})). For $m\ge1$ we set
	\[
	f_m(t,x,y)=\inf_{z\in\BQ}\{m|y-z|+f(t,x,z)\}
	\]
($\BQ$ is the set of rational numbers). The approximations $f_m$, $m\geq1$, have the following properties (see, e.g., \cite[Lemma 1]{LSM}):
	\begin{equation}
		\label{eq2.4}
		\quad |f_m(t,x,y)-f_m(t,x,y')|\le m|y-y'|,\quad   t\in[0,T], x\in\BR^d, y,y'\in\BR,
	\end{equation}
	\begin{equation}
		\label{eq2.5}
		f_1(t,x,y)\le f_m(t,x,y)\le f(t,x,y), \quad t\in[0,T], x\in\BR^d, y\in\BR,
	\end{equation}
Furthermore,  for all $t\in[0,T]$ and $x\in\BR^d, y\in\BR$,
	\begin{align*}
		f_m(t,x,y)\nearrow f(t,x,y)  \mbox{ and if } (x_m,y_m)\to(x,y)\,\,\mbox{\rm then }
		f_m(t,x_m,y_m)\rightarrow f(t,x,y).
\end{align*}
Note also that by (\ref{eq2.2})  and (\ref{eq2.5}), for all $t\in[0,T]$, $x\in\BR^d$, $y\in\BR$, $m,n\geq1$ we have
	\begin{equation}\label{eq2.52}
		|f_m(t,X^n_t,0)|\le C \quad\mbox{\rm and}\quad\sup_{|y|\le r}|f_m(t,X^n_t,y)|\le r+C+\sup_{|y|\le r}|f(t,X^n_t,y)|.
	\end{equation}
	By Theorem \ref{th2.2}, for any $n,m\ge1$  there exists a unique solution $(Y^{n,(m)},M^{n,(m)})$ of $\mbox{BSDE}(\xi_n,X^n,f_m,\BF^n)$. Note that by (\ref{eq2.2}) and (\ref{eq2.52}),
	\begin{equation}
		\label{eq2.6}
		|Y^{n,(m)}|,\,|Y^n|\le C
	\end{equation}
	(see \cite[(2.10)]{KR:JFA}). Furthermore, for every $m\ge1$ there also exists a unique solution $(Y^{(m)},M^{(m)})$ of $\mbox{BSDE}(\xi,X,f_m,\BF)$ such that
	\begin{equation}
		\label{eq2.7}
		Y^{(m)}_t\nearrow Y_t\quad P\mbox{-a.s.},\quad \sup_{t\le T}|Y^{(m)}_t-Y_t|\arrowpl0,
		\quad \sup_{t\le T}|M^{(m)}_t-M_t|\arrowpl0
	\end{equation}
	(see \cite[Lemma 2.6]{KR:JFA} and its proof).  By {\em Step 1}, for every $m\ge1$,
	\begin{equation}
		\label{eq2.8}
		(Y^{n,(m)},M^{n,(m)})\arrowpl(Y^{(m)},M^{(m)})\quad\mbox{in }D([0,T];\BR^2).
	\end{equation}
	By It\^o's formula for convex functions (see, e.g., \cite[page 216]{P}),
	\begin{align*}
		|Y^n_t-Y^{n,(m)}_t|&\le\int^T_t\mbox{sgn}(Y^n_s-Y^{(n,(m)}_s)
		(f(s,X^n_s,Y^n_s)-f_m(s,X^n_s,Y^{n,(m)}_s))\,ds\\
		&\quad-\int^T_t\mbox{sgn}(Y^n_{s-}-Y^{n,(m)}_{s-})\,d(M^n_s-M^{n,(m)}_s),
	\end{align*}
where $\mbox{sgn}(x)=1 $ if $x>0$ and $\mbox{sgn}(x)=-1 $ if $x\le0$.	Since $Y^n_t\ge Y^{n,(m)}_t$, assumption (H2) implies that
	\[
	|Y^n_t-Y^{n,(m)}_t|\le\int^T_t
	(f(s,X^n_s,Y^n_s)-f_m(s,X^n_s,Y^{n,(m)}_s))\,ds-\int^T_td(M^n_s-M^{n,(m)}_s).
	\]
	Hence, for every $t\in[0,T]$,
	\begin{equation}
	\label{eq2.22}
	|Y^n_t-Y^{n,(m)}_t|\le E\Big(\int^T_0
	|f(s,X^n_s,Y^n_s)-f_m(s,X^n_s,Y^{n,(m)}_s)|\,ds\,\big|\,\FF^n_t\Big).
	\end{equation}
	By this estimate and \cite[Lemma 6.1]{BDHPS}, for $\beta\in(0,1)$ we have
	\begin{equation}
		\label{eq2.9}
		E\sup_{t\le T}|Y^n_t-Y^{n,(m)}_t|^{\beta}\le(1-\beta)^{-1}
		\Big(E\int^T_0
		|f(s,X^n_s,Y^n_s)-f_m(s,X^n_s,Y^{n,(m)}_s)|\,ds\Big)^{\beta}.
	\end{equation}
	Observe that by  (\ref{eq2.2}), (\ref{eq2.4}), (\ref{eq2.52}) and (\ref{eq2.6}),
	\begin{align*}
		&|f(s,X^n_s,Y^{n,(m)}_s)-f_m(s,X^n_s,Y^{n,(m)}_s)|
		=f(s,X^n_s,Y^{n,(m)}_s)-f_m(s,X^n_s,Y^{n,(m)}_s)\\
		&\qquad \le f(s,X^n_s,Y^{n,(m)}_s)-f_1(s,X^n_s,Y^{n,(m)}_s)\\
		&\qquad\le F^n_C(s,X^n_s)+|f(s,X^n_s,0)|+|f_1(s,X^n_s,Y^{n,(m)}_s)|\\
		&\qquad\le F^n_C(s, X^n_s)+|f(s,X^n_s,0)|+|Y^{n,(m)}_s|+|f_1(s,X^n_s,0)|
		\le F^n_C(s,X^n_s)+3C.
	\end{align*}
	By this and (H5), we can pass to the limit on the right-hand side of (\ref{eq2.9}). We then get
	\begin{align}
		\label{eq2.10}	
		\lim_{m\rightarrow\infty}&\limsup_{n\rightarrow\infty}
		E\sup_{t\le T}|Y^n_t-Y^{n,(m)}_t|^{\beta} \nonumber\\
		&\leq\lim_{m\to\infty}(1-\beta)^{-1}
		\Big(E\int^T_0
		|f(s,X_s,Y_s)-f_m(s,X_s,Y^{(m)}_s)|\,ds\Big)^{\beta}=0.
	\end{align}
	Using (\ref{eq2.22}), one can also show that
	$\lim_{m\rightarrow\infty}\limsup_{n\rightarrow\infty}
	E|Y^n_t-Y^{n,(m)}_t|=0$ for $t\in[0,T]$.
	 Since
	$M^n_t=Y^n_t-Y^n_0+\int^t_0f(s,X^n_s,Y^n_s)\,ds$ and similarly $M^{n,(m)}_t=Y^{n,(m)}_t-Y^{n,(m)}_0+\int^t_0f_m(s,X^n_s,Y^{n,(m)}_s)\,ds$,
	it is  clear that we  have
	\[
	\lim_{m\rightarrow\infty}\limsup_{n\rightarrow\infty}
	E|M^n_T-M^{n,(m)}_T|=0.
	\]
	Using once again  \cite[Lemma 6.1]{BDHPS}, for $\beta\in(0,1)$ we get
	\begin{equation}
	\label{eq2.16}
	\lim_{m\rightarrow\infty}\limsup_{n\rightarrow\infty}
	E\sup_{t\le T}|M^n_t-M^{n,(m)}_t|^{\beta}=0.
	\end{equation}
	Putting together (\ref{eq2.7}), (\ref{eq2.8}) and (\ref{eq2.10}), (\ref{eq2.16}) we obtain (\ref{eq2.1}).\\
	{\em Step 3}. The general case. For $k\ge1$ set $T_k(x)=(-k)\vee(x\wedge k)$ and then
	\[
	\xi^{(k)}=T_k(\xi),\quad \xi^{n,(k)}=T_k(\xi_n),
	\]
	\begin{equation}
		\label{eq2.14}
		f^k(t,x,y)=f(t,x,y)-f(t,x,0)+T_k(f(t,x,0)).
	\end{equation}
	Let $(Y^{n,(k)},M^{n,(k)})$ be the solution of $\mbox{BSDE}(\xi^{(n,(k)},X^n,f^k,\BF)$, i.e.
	\[
	Y^{n,(k)}_t=\xi^{n,(k)}+\int^T_tf^k(s,X^n_s,Y^{n,(k)}_s)\,ds-\int^T_tdM^{n,(k)}_s, \quad t\in[0,T],
	\]
	and $(Y^{(k)},M^{(k)})$ be the solution of $\mbox{BSDE}(\xi^{(k)},X,f^k,\BF)$, i.e.
	\[
	Y^{(k)}_t=\xi^{(k)}+\int^T_tf^k(s,X_s,Y^{(k)}_s)\,ds-\int^T_tdM^{(k)}_s, \quad t\in[0,T],
	\]	
	From the proof of \cite[Theorem 2.7]{KR:JFA} it  follows that
	\begin{equation}
		\label{eq2.12}
		\sup_{t\le T}|Y^{(k)}_t-Y_t|\arrowpl0,\qquad \sup_{t\le T} |M^{(k)}_t-M_t|\arrowpl0.
	\end{equation}
On the other hand, from {\em Step 2} we know that for every $k\ge1$,
\begin{equation}
\label{eq2.13}
(Y^{n,(k)},M^{n,(k)})\arrowpl (Y^{(k)},M^{(k)})\quad\mbox{in }D([0,T];\BR^2).
\end{equation}
Since $Y^n-Y=(Y^n-Y^{(n,(k)})+(Y^{n,(k)}-Y^{(k)})+(Y^{(k)}-Y)$ (and we have similar decomposition of $M^n-M$), to complete the proof it suffices to estimate the difference $Y^n-Y^{n,(k)}$ (and $M^n-M^{n,(k)}$) uniformly in $n$. To this end, we first use the Meyer--Tanaka formula and (H2) to get
	\begin{align*}
		|Y^n_t-Y^{n,(k)}_t|&\le|\xi^n-\xi^{n(k)}|\\
		&\quad+\int^T_t\mbox{sgn}(Y^n_s-Y^{n,(k)}_s)
		(f(s,X^n_s,Y^n_s)-f^k(s,X^n_s,Y^{n,(k)}_s))\,ds\\
		&\quad-\int^T_td(M^n_s-M^{n,(k)}_s)\\
		&\le|\xi^n-\xi^{n,(k)}|+\int^T_t
		|f(s,X^n_s,Y^n_s)-f^k(s,X^n_s,Y^{n,(k)}_s)|\,ds\\&\quad-\int^T_td(M^n_s-M^{n,(k)}_s)\\
		&\le|\xi^n-\xi^{n,(k)}|+\int^T_t
		|f(s,X^n_s,0)|\fch_{\{|f(s,X^n_s,0)|>k\}}\,ds
		-\int^T_t\!d(M^n_s-M^{n,(k)}_s).
	\end{align*}
	Hence
	\begin{equation}
		\label{eq2.15}
		|Y^n_t-Y^{n,(k)}_t|\le E\Big(|\xi^n|
		\fch_{\{|\xi^n|>k\}}+\int^T_0|f(s,X^n_s,0)|
		\fch_{\{|f(s,X^n_s,0)|>k\}}\,ds\,\big|\,\FF^n_t\Big),
	\end{equation}
	so by (H6) and (H7), $\lim_{k\rightarrow\infty}\sup_{n\ge1}E|Y^n_t-Y^{n,(k)}_t|=0$, $t\in[0,T]$.  Furthermore, applying  \cite[Lemma 6.1]{BDHPS} we get
	\[
	E\sup_{t\le T}|Y^n_t-Y^{n,(k)}|^{\beta}\le(1-\beta)^{-1}
	\Big(E|\xi^n|\fch_{\{|\xi^n|>k\}}+E\int^T_0|f(s,X^n_s,0)|
	\fch_{\{|f(s,X^n_s,0)|>k\}}\,ds\Big)^{\beta}
	\]
	for $\beta\in(0,1)$. Using once again (H6) and (H7) we get
	\[
	\lim_{k\rightarrow\infty}\sup_{n\ge1}E\sup_{t\le T}|Y^n_t-Y^{n,(k)}_t|^{\beta}=0.
	\]
	Finally, similarly to  {\em Step 2},  by previously used arguments we deduce first that
	\[
	\lim_{k\rightarrow\infty}\sup_{n\ge1}E|M^n_T-M^{n,(k)}_T|=0
	\]
	and next, by \cite[Lemma 6.1]{BDHPS}, that  for $\beta\in(0,1)$ we have
	\[
	\lim_{k\rightarrow\infty}\sup_{n\ge1}E\sup_{t\le T}|M^n_t-M^{n,(k)}_t|^{\beta}=0,
	\]
	which together with (\ref{eq2.12}) and (\ref{eq2.13}) gives (\ref{eq2.1}) and completes the proof in the general case.
\end{proof}

\section{Probabilistic  solutions}

In what follows we assume that $\sigma:\BR^d\rightarrow\BR^{d\times d}$, $b:\BR^d\rightarrow\BR^d$, $\gamma:\BR^d\rightarrow\BR^{d\times k}$ are bounded Lipschitz continuous functions and  $\nu$ is a L\'evy measure on $\BR^d$, i.e.
a measure such that $\nu(\{0\})=0$ and $\int_{\BR^d}(1\wedge|y|^2)\,\nu(dy)<\infty$. We let $a=\sigma\cdot\sigma^T$, where $\sigma^T$ denotes the transpose of $\sigma$.

\subsection{SDEs and Feller semigroups}

Let $s\in[0,T)$, $x=(x_1,\dots,x_d)\in\BR^d$, $W=(W^1,\dots,W^d)$ be a standard Wiener process and $N=(N^1,\dots,N^k)$   be a pure-jump $k$-dimensional L\'evy proces independent of $W$ starting from 0. Consider the stochastic differential equation (SDE)
\begin{equation}
\label{eq3.1}
X^{s,x}_t=x+\int^t_sb(X^{s,x}_r)\,dr+\int^t_s\sigma(X^{s,x}_r)\,dW_r
+\int^t_s\gamma(X^{s,x}_{r-})\,dN_r,\quad t\in[s,T].
\end{equation}
It can be rewritten as
\[
X^{s,x,i}_t=x_i+
\sum^{d+1+k}_{j=1}\int^t_s\Phi_{ij}(X^{s,x}_{r-})\,dZ^{j}_r,
\quad t\in[s,T],\quad i=1,\dots,d,
\]
or succinctly as
\begin{equation}
\label{eq3.18}
X^{s,x}_t=x+\int^t_s\Phi(X^{s,x}_{r-})\,dZ_r, \quad t\in[s,T],
\end{equation}
where
\[
Z_t=(t, W_t,N_t),\,\,t\ge0,\qquad
\Phi_{ij}(x)=\begin{cases}
b_i(x) & j=1,\\
\sigma_{ij}(x), & j=2,\dots,d+1,\\
\gamma_{i,j-d-1}(x), & j=d+2,\dots,d+1+k.
\end{cases}
\]
It is well known (see, e.g., \cite[Theorem V.7]{P})  that for each $(s,x)\in[0,T)\times \BR^d$
there exists a unique strong solution $X^{s,x}$ of (\ref{eq3.1}). Furthermore,  by
\cite[Corollary 3.3]{SS} (see also  \cite[Theorem 2.49]{Sc}), $\BX=(X^{0,x})_{x\in\BR^d}$ is a Feller process, i.e.   it is a Markov process whose transition semigroup defined by
\[
P_tu(x)=Eu(X^{0,x}_t),\quad u\in\BB_b(\BR^d),
\]
is a Feller semigroup.   Let $(L, D(L))$ denote its  generator. By \cite[Theorem 3.5]{SS}
(or \cite[Theorem 2.50]{Sc}), $C^{\infty}_c(\BR^d)\subset D(L)$,
where $C^{\infty}_c(\BR^d)$ is  the space of smooth functions on $\BR^d$ with compact support.
Therefore, by the Courr\`ege theorem,  $L$  restricted to $C_c^{\infty}(\BR^d)$ is a pseudodifferential operator of the form
\begin{equation}
\label{eq3.3}
Lu(x)=-p(x,D)u(x):=-\int_{\BR^d}e^{i(x,\xi)}p(x,\xi)\hat u(\xi)\,d\xi,
\quad u\in C^{\infty}_c(\BR^d),
\end{equation}
with some symbol $p(x,\xi):\BR^d\times\BR^d\rightarrow\BC$ (in (\ref{eq3.3}), $\hat u$ is the Fourier transform of $u$).
In fact, by \cite[Theorem 3.1]{SS}, $p$ has the form
\[
p(x,\xi)=\psi_Z(\Phi^T(x)\xi), \quad x,\xi\in\BR^d,
\]
where $\psi_Z$ is the characteristic exponent of the L\'evy process $Z$. For later use, we give below more explicit form of $p$. Since
\[
 \psi_Z(\bar\xi)=-i\xi^{(1)}+\frac12(\xi^{(2)},\xi^{(2)})+\psi(\xi^{(3)}),
\quad \bar\xi=(\xi^{(1)},\xi^{(2)},\xi^{(3)})\in\BR\times\BR^d\times\BR^k,
\]
where $\psi$ is the characteristic exponent of $N$, and
\[
 \Phi^T(x)\xi=((b(x),\xi),\sigma^T(x)\xi,\gamma^T(x)\xi)^T\in\BR^{1+d+k},\quad \xi\in\BR^d,
\]
we see that
\begin{equation}
\label{eq3.20}
p(x,\xi)=-i(b(x),\xi)+\frac12(a(x)\xi,\xi)+\psi(\gamma^T(x)\xi),\quad x,\xi\in\BR^d.
\end{equation}

From the  Riesz representation theorem it follows that  there exists a family of kernels $\{p_t(x,dy), (t,x)\in(0,\infty)\times\BR^d\}$ such that
\[
P_tu(x)=\int_{\BR^d}u(y)p_t(x,dy),\quad  u\in\BB_b(\BR^d).
\]
Furthermore, for each $(t,x)\in(0,\infty)\times\BR^d$, $p_t(x,\cdot)$ is a uniquely defined positive Radon measure on $\BR^d$ (for more details see \cite[p. 7]{BSW}). In case the kernels  are absolutely continuous with respect to the Lebesgue measure on $\BR^d$, i.e. are represented as $p_t(x,dy)=p_t(x,y)\,dy$, we call $p_t(x,y)$ the transition densities of $\BX$.

\subsection{Probabilistic solutions and BSDEs, L\'evy-type operators}

We first consider the case where $L$ is a L\'evy type operator of the form (\ref{eq3.3}).
\begin{definition}
\label{def3.1}
A measurable  function $u:[0,T)\times\BR^d\rightarrow\BR$  is called a  probabilistic solution of (\ref{eq1.1}) if
\begin{equation}
\label{eq3.5}
E|\varphi(X^{s,x}_T)|+E\int^T_s|f(t,X^{s,x}_t,u(r,X^{s,x}_t))|\,dt<\infty
\end{equation}
and
\begin{equation}
\label{eq3.23}
u(s,x)=E\Big(\varphi(X^{s,x}_T)
+\int^T_sf(t,X^{s,x}_t,u(r,X^{s,x}_t))\,dt\Big)
\end{equation}
for $(s,x)\in[0,T)\times\BR^d$.
\end{definition}

Note that (\ref{eq3.23}) can be considered as a nonlinear version of the Feynman--Kac formula.

In many interesting situations Definition \ref{def3.1} is too strong, because for some natural assumptions on $\varphi,f$ we do not know whether (\ref{eq3.5}) is satisfied  for every $(s,x)$. On the other hand, for some subclasses of operators of the form (\ref{eq1.4}) we have additional information on the associated process $X^{s,x}$ which allows us to give a meaningful weaker version of the definition of a solution. One example we given below. The second will be given in the next subsection.

Suppose that the transition kernels of $\BX$ are absolutely continuous with respect to the Lebesque measure and let $p_t(x,y)$ denote the transition densities.
Then (\ref{eq3.23}) can be written as
\begin{equation}
\label{eq3.24}
u(s,x)=\int_{\BR^d}\varphi(y)p_{T-s}(x,y)\,dy
+\int^T_s\!\int_{\BR^d}f(t,y,u(t,y))p_{t-s}(x,y)\,dt\,dy.
\end{equation}
Of course, to define the  right-hand side of (\ref{eq3.24}) we need not know that $u$ is defined everywhere on $[0,T)\times\BR^d$.  This leads to the following slightly weakened version of Definition \ref{def3.1}:

\begin{definition}
A real function $u$ defined a.e. on $[0,T)\times\BR^d$ is called an integral solution of (\ref{eq1.1}) if (\ref{eq3.24}) holds for a.e. $(s,x)\in [0,T)\times\BR^d$ (in particular the integrals in (\ref{eq3.24}) are well defined 	for a.e. $(s,x)\in [0,T)\times\BR^d$).
\end{definition}

Theorem \ref{th3.2} below provides  very useful relation between probabilistic solutions of (\ref{eq1.1})  and solutions of BSDEs of the form
\begin{equation}
\label{eq3.2}
Y^{s,x}_t=\varphi(X^{s,x}_T)+\int^T_tf(r,X^{s,x}_r,Y^{s,x}_r)\,dr
-\int^T_tdM^{s,x}_r,\quad t\in[s,T].
\end{equation}

In what follows, for  $0\le s<T$ we denote by $(\FF^s_t)_{t\in[s,T]}$ the standard augmentation of the filtration $(\FF^{0,s}_t)_{t\in[s,T]}$, where $\FF^{0,s}_t=\sigma((W_r-W_s,N_r-N_s), r\in[s,t])$. By slight abuse of notation, we say that $f$ satisfies (H3) (resp. (H4)) with  $X$ replaced by $X^{s,x}$ if $E\int^T_s|f(t,X^{s,x}_t)|\,dt<\infty$  for $r>0$ (resp. $E\int^T_s|f(t,X^{s,x}_t,0)|\,dt<\infty$).

\begin{theorem}
\label{th3.2}
Assume that for every $(s,x)\in[0,T)\times\BR^d$, $E|\varphi(X^{s,x}_T)|<\infty$ and $f$ satisfies \mbox{\rm(H1), (H2)} and
\mbox{\rm(H3), (H4)} with $X$ replaced by $X^{s,x}$. Then
\begin{enumerate}[\rm(i)]
\item For every $(s,x)\in[0,T)\times\BR^d$ there exists a unique solution $(Y^{s,x},M^{s,x})$ of \mbox{\rm(\ref{eq3.2})}.

\item Let $u(s,x)=EY^{s,x}_s$, $(s,x)\in[0,T)\times\BR^d$. Then $Y^{s,x}_t=u(t,X^{s,x}_t)$ $P$-a.s for every $t\in[s,T]$ and $u$ is a probabilistic solution of \mbox{\rm(\ref{eq1.1})}.

\item Let $u$ be a probabilistic solution of \mbox{\rm(\ref{eq1.1})}. Then for every $(s,x)\in[0,T)\times\BR^d$ there exists a unique version $Y^{s,x}$
of the process $[s,T]\ni t\mapsto u(t,X^{s,x}_t)$ such that the pair $(Y^{s,x},M^{s,x})$, where $M^{s,x}$ is a  c\`adl\`ag version of the martingale
\[
\tilde M^{s,x}_t=E\Big(\varphi(X^{s,x}_T)
+\int^T_sf(r,X^{s,x}_r,u(r,X^{s,x}_r))\,dr\,\big|\,\FF^s_t\Big)
-u(s,X^{s,x}_s), \quad t\in[s,T],
\]
is the unique solution of \mbox{\rm(\ref{eq3.2})}.
\end{enumerate}
\end{theorem}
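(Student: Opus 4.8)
The plan is to reduce the statement to the abstract BSDE theory of Section~2 applied, for each fixed $(s,x)$, to the forward process $X^{s,x}$ on the probability space carrying $(W,N)$ with filtration $\BF^s=(\FF^s_t)_{t\in[s,T]}$. First I would observe that the hypotheses of the theorem are exactly hypotheses (H1)--(H4) of Section~2 with $X$ replaced by $X^{s,x}$ and $\xi=\varphi(X^{s,x}_T)$: (H1), (H2) hold by assumption, (H3) with $X^{s,x}$ and (H4) (since $E|\varphi(X^{s,x}_T)|<\infty$ and $f$ satisfies (H4) with $X^{s,x}$) are precisely what is assumed. Hence Theorem~\ref{th2.2} applies and yields, for each $(s,x)\in[0,T)\times\BR^d$, a unique solution $(Y^{s,x},M^{s,x})$ of $\mbox{BSDE}(\varphi(X^{s,x}_T),X^{s,x},f,\BF^s)$ with $M^{s,x}$ a uniformly integrable martingale; this is exactly equation~(\ref{eq3.2}), so part~(i) is immediate.

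For part~(ii), set $u(s,x):=EY^{s,x}_s=Y^{s,x}_s$ (the latter because $Y^{s,x}_s$ is $\FF^s_s$-measurable and $\FF^s_s$ is trivial). The key identity to establish is the Markov property $Y^{s,x}_t=u(t,X^{s,x}_t)$ $P$-a.s.\ for $t\in[s,T]$. I would prove this by the standard argument: fix $t\in(s,T)$, and use that, conditionally on $X^{s,x}_t=x'$, the ``tail'' of the forward equation from time $t$ on has the same law as $X^{t,x'}$ driven by the increments of $(W,N)$ after $t$ (flow property of the SDE~(\ref{eq3.18}), e.g.\ via uniqueness of strong solutions), together with uniqueness of the BSDE solution on $[t,T]$ from Theorem~\ref{th2.2}. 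Concretely, $(Y^{s,x}_r)_{r\in[t,T]}$ restricted and the martingale increment $M^{s,x}_\cdot-M^{s,x}_t$ solve the BSDE on $[t,T]$ with terminal condition $\varphi(X^{s,x}_T)$ and forward process $X^{s,x}$; conditioning on $\FF^s_t$ and invoking the flow identity $X^{s,x}_r=X^{t,X^{s,x}_t}_r$ for $r\ge t$ (in law, jointly), uniqueness forces $Y^{s,x}_t=u(t,X^{s,x}_t)$. Integrating this identity at $t=s$ against the law of $X^{s,x}_s=x$ gives nothing new, but plugging $Y^{s,x}_t=u(t,X^{s,x}_t)$ into~(\ref{eq3.2}) at $t=s$ and taking expectations yields~(\ref{eq3.23}); the integrability~(\ref{eq3.5}) follows from class (D) of $Y^{s,x}$ together with (H3), (H4) for $X^{s,x}$. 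Thus $u$ is a probabilistic solution.

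For part~(iii), let $u$ be any probabilistic solution. Fix $(s,x)$ and define $\tilde M^{s,x}$ by the displayed formula; it is a uniformly integrable martingale on $[s,T]$ by~(\ref{eq3.5}), and $\tilde M^{s,x}_s=0$. Let $M^{s,x}$ be its c\`adl\`ag version. One checks that the process $Y^{s,x}_t:=u(s,x)+\tilde M^{s,x}_t-\int_s^t f(r,X^{s,x}_r,u(r,X^{s,x}_r))\,dr$ satisfies~(\ref{eq3.2}): at $t=T$ it equals $\varphi(X^{s,x}_T)$ by the martingale identity $\tilde M^{s,x}_T=\varphi(X^{s,x}_T)+\int_s^T f\,dr-u(s,x)$, and the integral equation~(\ref{eq3.2}) then holds by construction. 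The remaining point is that $Y^{s,x}_t=u(t,X^{s,x}_t)$ $P$-a.s.\ for each $t$; this again uses the Markov/flow argument: by~(\ref{eq3.23}) applied at $(t,X^{s,x}_t)$ and the conditional form of the flow property, $E\big(\varphi(X^{s,x}_T)+\int_t^T f(r,X^{s,x}_r,u(r,X^{s,x}_r))\,dr\,\big|\,\FF^s_t\big)=u(t,X^{s,x}_t)$, which is exactly $Y^{s,x}_t$. Hence $(Y^{s,x},M^{s,x})$ solves~(\ref{eq3.2}) with $Y^{s,x}$ a version of $t\mapsto u(t,X^{s,x}_t)$, and uniqueness is inherited from Theorem~\ref{th2.2}; since any two versions of the solution agree, the version $Y^{s,x}$ and the martingale $M^{s,x}$ are uniquely determined.

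I expect the main obstacle to be the rigorous justification of the Markov/flow step — that $(Y^{s,x}_r)_{r\ge t}$ ``is'' the BSDE solution started from $(t,X^{s,x}_t)$, i.e.\ the interplay of the flow property of the SDE with the conditional version of BSDE uniqueness under the augmented filtrations $\BF^s$. This is standard in the diffusion and L\'evy-driven settings (it relies on uniqueness of strong solutions of~(\ref{eq3.18}), the independence of increments of $Z=(t,W_t,N_t)$, and Theorem~\ref{th2.2}), but it is where the bulk of the careful measure-theoretic bookkeeping lives; the rest of the proof is formal manipulation of the Feynman--Kac representation and the defining BSDE equation.
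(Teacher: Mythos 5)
Your proposal is correct and follows essentially the same route as the paper: part (i) by applying Theorem \ref{th2.2} to $\xi=\varphi(X^{s,x}_T)$ and $X^{s,x}$, part (ii) via the flow identity $X^{t,X^{s,x}_t}_r=X^{s,x}_r$ plus BSDE uniqueness to get $Y^{s,x}_t=u(t,X^{s,x}_t)$ and then taking expectations at $t=s$, and part (iii) by computing $\tilde M^{s,x}_t$ through the Markov property and formula (\ref{eq3.23}) at $(t,X^{s,x}_t)$. The Markov/flow step you flag as the main technical point is treated equally briskly in the paper, so there is no substantive difference in method or level of rigor.
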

\begin{proof}
Part (i) follows straight from Theorem \ref{th2.2} applied to $\xi=\varphi(X^{s,x}_T)$ and the process $X^{s,x}$. By uniqueness of the solution of (\ref{eq3.1}), $X^{t,X^{s,x}_t}_r=X^{s,x}_r$, $r\in[t,T]$. From this and uniqueness of (\ref{eq3.2}) it folows that
\begin{equation}
\label{eq3.22}
Y^{s,x}_t=Y^{t,X^{s,x}_t}_t\quad P\mbox{-a.s.}
\end{equation}
for $t\in[s,T]$.  By (\ref{eq3.22}),
$Y^{s,x}_t=u(t,X^{s,x}_t)$ $P$-a.s for every $t\in[s,T]$, so by Fubini's theorem,
\begin{equation}
\label{eq3.26}
E\int^T_s|f(t,X^{s,x}_t,Y^{s,x}_t)-f(t,X^{s,x}_t,u(t,X^{s,x}_t))|\,dt=0.
\end{equation}
Therefore taking $t=s$ in (\ref{eq3.2}) and integrating with respect to $P$ we get (\ref{eq3.23}). This proves (ii). To prove (iii), we write
\begin{align*}
\tilde 	M^{s,x}_t&=\int^t_sf(r,X^{s,x}_r,u(r,X^{s,x}_r))\,dr\\
	&\quad+E\Big(\varphi(X^{s,x}_T)
	+\int^T_tf(r,X^{s,x}_r,u(r,X^{s,x}_r))\,dr\,\big|\,\FF^s_t\Big)
	-u(s,X^{s,x}_s).
\end{align*}
By the Markov property,
\begin{align*}
\tilde M^{s,x}_t&=\int^t_sf(r,X^{s,x}_r,u(r,X^{s,x}_r))\,dr\\
	&\quad+E\Big(\varphi(X^{s,x}_T)
	+\int^T_sf(r,X^{s,x}_r,u(r,X^{s,x}_r))\,dr\Big)\Big|_{(s,x):=(t,X^{s,x}_t)}
	-u(s,X^{s,x}_s)\\
	&=\int^t_sf(r,X^{s,x}_r,u(r,X^{s,x}_r))\,dr +u(t,X^{s,x}_t)-u(s,X^{s,x}_s).
\end{align*}
Since the martingale $\tilde M^{s,x}$ has a unique c\`adl\`ag version $M^{s,x}$, it follows from the above inequality that $t\mapsto u(t,X^{s,x}_t)$ has a unique c\`adl\`ag version $Y^{s,x}$, and moreover,
\[
Y^{s,x}_T- Y^{s,x}_t =-\int^T_tf(r,X^{s,x}_r, Y^{s,x}_r))\,dr
+ M^{s,x}_T-M^{s,x}_t,\quad t\in[s,T],\quad P\mbox{-a.s.},
\]
which shows that $(Y^{s,x},M^{s,x})$ is a solution of  (\ref{eq3.2}).
\end{proof}

\begin{corollary}
Under the assumptions of Theorem \ref{th3.2} there exists at most one probabilistic solution of \mbox{\rm(\ref{eq1.1})}.
\end{corollary}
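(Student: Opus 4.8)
The plan is to reduce uniqueness of probabilistic solutions of \eqref{eq1.1} to the already-established uniqueness of the solution of the BSDE \eqref{eq3.2}, exactly as provided by Theorem \ref{th3.2}. Suppose $u^1$ and $u^2$ are two probabilistic solutions of \eqref{eq1.1}. Fix $(s,x)\in[0,T)\times\BR^d$. By Theorem \ref{th3.2}(iii) applied to $u^1$, the process $t\mapsto u^1(t,X^{s,x}_t)$ admits a c\`adl\`ag version $Y^{s,x,1}$ such that $(Y^{s,x,1},M^{s,x,1})$ solves \eqref{eq3.2}; likewise $u^2$ yields a solution $(Y^{s,x,2},M^{s,x,2})$ of the same equation \eqref{eq3.2} (the data $\varphi$, $f$, $X^{s,x}$ in \eqref{eq3.2} do not depend on which solution $u^i$ we started from). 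By Theorem \ref{th3.2}(i) the solution of \eqref{eq3.2} is unique, hence $Y^{s,x,1}_t=Y^{s,x,2}_t$ $P$-a.s. for all $t\in[s,T]$; in particular at $t=s$ we get $u^1(s,x)=E Y^{s,x,1}_s=E Y^{s,x,2}_s=u^2(s,x)$.

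Since $(s,x)\in[0,T)\times\BR^d$ was arbitrary, $u^1\equiv u^2$ on $[0,T)\times\BR^d$, which is the assertion. Note that the hypotheses of Theorem \ref{th3.2} are precisely what the corollary assumes, so every step above is licensed; no new estimates are needed.

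The only point requiring a word of care is the identification $u^i(s,x)=E Y^{s,x,i}_s$. This is not by definition of $u^i$ but follows because $Y^{s,x,i}$ is the version of $t\mapsto u^i(t,X^{s,x}_t)$ furnished by Theorem \ref{th3.2}(iii), evaluated at $t=s$ where $X^{s,x}_s=x$ deterministically, so $Y^{s,x,i}_s=u^i(s,x)$ $P$-a.s. and taking expectations is harmless. Thus I do not anticipate any genuine obstacle: the content of the corollary is entirely contained in parts (i) and (iii) of Theorem \ref{th3.2}, and the proof is a two-line invocation of BSDE uniqueness together with this trivial evaluation at the initial time.
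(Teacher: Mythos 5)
Your proof is correct and follows essentially the same route as the paper: both apply Theorem \ref{th3.2}(iii) to obtain, for each probabilistic solution, a c\`adl\`ag version solving the BSDE (\ref{eq3.2}), and then invoke uniqueness of that BSDE's solution to conclude $u_1(s,x)=u_2(s,x)$ at the (deterministic) initial point. The only cosmetic difference is that you pass through the expectation $EY^{s,x,i}_s$, whereas the paper uses the a.s.\ equality $Y^{s,x,1}_s=Y^{s,x,2}_s$ directly; both are fine.
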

\begin{proof}
Suppose that $u_i$, $i=1,2$, are  probabilistic solutions. Let $Y^{s,x,i}$ denote the versions of $t\mapsto u_i(t,X^{s,x}_t)$ of  Theorem \ref{th3.2}(iii). By uniqueness (see Theorem \ref{th2.2}), $Y^{s,x,1}_s=Y^{s,x,2}_s$ $P$-a.s., so $u_1(s,x)=u_2(s,x)$.
\end{proof}

\begin{corollary}
\label{cor3.5}
Let  the assumptions of Theorem \ref{th3.2} hold and  $u$ be the probabilistic solution of \mbox{\rm(\ref{eq1.1})}. Then
\begin{equation}
\label{eq3.6}
	|u(s,x)|\le e^{\mu(T-s)}E\Big(|\varphi(X^{s,x}_T)|+\int^T_s|f(t,X^{s,x}_t,0)|\,dt\Big).
\end{equation}
\end{corollary}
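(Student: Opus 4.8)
The idea is to translate the estimate into an a priori bound for the associated BSDE. By Theorem~\ref{th3.2}(i)--(ii), for every $(s,x)\in[0,T)\times\BR^d$ the pair $(Y^{s,x},M^{s,x})$ solving \eqref{eq3.2} exists and is unique, $M^{s,x}$ is a uniformly integrable martingale (Theorem~\ref{th2.2}), and $Y^{s,x}_t=u(t,X^{s,x}_t)$ $P$-a.s.\ for every $t\in[s,T]$; taking $t=s$ and using $X^{s,x}_s=x$ gives $u(s,x)=Y^{s,x}_s$. Hence it suffices to prove, for a solution $(Y,M)$ of a general $\mathrm{BSDE}(\xi,X,f,\BF)$ with $M$ a uniformly integrable martingale, the conditional estimate
\[
|Y_t|\le e^{\mu(T-t)}\,E\Big(|\xi|+\int^T_t|f(r,X_r,0)|\,dr\,\Big|\,\FF_t\Big),\qquad t\in[0,T],
\]
and then specialize to $\xi=\varphi(X^{s,x}_T)$, $X=X^{s,x}$, $\BF=(\FF^s_t)$, $t=s$, using that $\FF^s_s$ is the augmentation of the trivial $\sigma$-field, so that the conditional expectation becomes an ordinary expectation; this yields exactly \eqref{eq3.6}. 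Moreover, by Remark~\ref{rem2.5} we may reduce to the case $\mu=0$: applying the $\mu=0$ estimate to $\tilde\xi,\tilde f$ and using $\tilde Y_t=e^{\mu t}Y_t$, $\tilde f(t,x,0)=e^{\mu t}f(t,x,0)$ recovers the factor $e^{\mu(T-t)}$ after dividing by $e^{\mu t}$ and bounding $e^{\mu r}\le e^{\mu T}$ under the integral.

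So assume $\mu=0$. Apply the Meyer--Tanaka formula (It\^o--Tanaka for $x\mapsto|x|$) to $t\mapsto|Y_t|$. Since $|\cdot|$ is convex, the local time at $0$ together with the jump corrections $|Y_s|-|Y_{s-}|-\mathrm{sgn}(Y_{s-})\Delta Y_s\ge0$ form a nonnegative increasing process $K$ (with $K_0=0$), so that
\[
|Y_t|=|\xi|+\int^T_t\mathrm{sgn}(Y_{s-})f(s,X_s,Y_s)\,ds-\int^T_t\mathrm{sgn}(Y_{s-})\,dM_s-(K_T-K_t).
\]
Discarding $-(K_T-K_t)\le0$, replacing $Y_{s-}$ by $Y_s$ in the $ds$-integral (only countably many jumps), and using (H2) with $\mu=0$ in the form $\mathrm{sgn}(y)f(s,x,y)\le\mathrm{sgn}(y)f(s,x,0)\le|f(s,x,0)|$, we get
\[
|Y_t|\le|\xi|+\int^T_t|f(s,X_s,0)|\,ds-\int^T_t\mathrm{sgn}(Y_{s-})\,dM_s.
\]
Taking $E(\cdot\,|\,\FF_t)$ and noting that the stochastic-integral term drops out (it is the integral of a bounded predictable process against the uniformly integrable martingale $M$, exactly as in the derivation of \eqref{eq2.15}; if desired, one localizes with stopping times $\tau_k\uparrow T$ reducing the integral, takes conditional expectations at $T\wedge\tau_k$, and passes to the limit by Fatou, using that the right-hand side is dominated by the $P$-integrable variable $|\xi|+\int_0^T|f(s,X_s,0)|\,ds$ from (H4)), we obtain the desired $\mu=0$ estimate.

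The only genuinely delicate point is this conditional-expectation step, together with the sign bookkeeping in the jump part of the Tanaka formula; both are handled exactly as in the manipulations already carried out in the proof of Theorem~\ref{th2.4}. Everything else — the reduction to $\mu=0$, the translation PDE~$\leftrightarrow$~BSDE via Theorem~\ref{th3.2}, and the final specialization $t=s$ — is routine. (Alternatively, once $u(s,x)=Y^{s,x}_s$ is established, one may simply invoke the standard a priori estimate for BSDEs with monotone generator, e.g.\ \cite[(2.10)]{KR:JFA}, in place of the Meyer--Tanaka argument above.)
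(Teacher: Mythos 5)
Your proposal is correct and follows essentially the same route as the paper: reduce to $\mu=0$ via the exponential transform of Remark \ref{rem2.5}, apply the Meyer--Tanaka formula to the solution of the BSDE furnished by Theorem \ref{th3.2}, use (H2) to kill the nonlinear term and bound the remaining generator term by $|f(t,X^{s,x}_t,0)|$, and drop the martingale part after taking expectations at $t=s$. The extra care you take with the localization/class (D) argument for discarding the stochastic integral is a harmless refinement of what the paper does implicitly.
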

\begin{proof}
Let $(Y^{s,x},M^{s,x})$ be the pair of Theorem \ref{th3.2}(iii). For $t\in[s,T]$ we set
$\tilde Y^{s,x}_t=e^{\mu(t-s)}Y^{s,x}_t, \tilde M^{s,x}_t=e^{\mu(t-s)}M^{s,x}_t$. Then (see Remark \ref{rem2.5})  the pair $(\tilde Y^{s,x},\tilde M^{s,x}$) is the  unique solution of the BSDE of the form  (\ref{eq3.2}) but with final condition $e^{\mu (T-s)}\varphi(X^{s,x}_T)$ and coefficient
$\tilde f(t,x,y)=e^{\mu(t-s)}f(t,x,e^{-\mu(t-s)}y)-\mu y$. Applying the Meyer--Tanaka formula we get
\begin{align*}
|\tilde Y^{s,x}_s|&\le e^{\mu(T-s)}|\varphi(X^{s,x}_T)|
-\int^T_s\mbox{sgn}(\tilde Y^{s,x}_{t-})\,d\tilde Y^{s,x}_t\\
&\le e^{\mu(T-s)}|\varphi(X^{s,x}_T)|
+\int^T_s\mbox{sgn}(\tilde Y^{s,x}_t)\tilde f(t,X^{s,x}_t,\tilde Y^{s,x}_t)\,dt
-\int^T_s\mbox{sgn}(\tilde Y^{s,x}_{t-})\,d\tilde M^{s,x}_t.
\end{align*}
Hence
\begin{align*}
E|Y^{s,x}_s|&\le e^{\mu(T-s)}E|\varphi(X^{s,x}_T)| +E\int^T_s\mbox{sgn}(Y^{s,x}_t)(\tilde f(t,X^{s,x}_t,Y^{s,x}_t)
	-\tilde f(t,X^{s,x}_t,0))\,dt\\
	&\quad+E\int^T_s\mbox{sgn}(Y^{s,x}_t)\tilde f(t,X^{s,x}_t,0)\,dt.
\end{align*}
This shows (\ref{eq3.6}) because $\tilde f$ satisfies (H2) with $\mu=0$, so the second term on the right-hand side of the above inequality is less then or equal to zero.
\end{proof}

In the next theorem we assume that the transition kernels of $\BX$ are absolutely continuous with respect to the Lebesgue measure.

\begin{theorem}
Assume that for every $s\in[0,T)$, $E|\varphi(X^{s,x}_T)|<\infty$ for a.e. $x\in\BR^d$ and $f$ satisfies \mbox{\rm(H1), (H2)} and \mbox{\rm(H3), (H4)}
with $X$ replaced by $X^{s,x}$ for   a.e. $x$. Then
\begin{enumerate}[\rm(i)]
\item There exists an integral solution $u$ of \mbox{\rm(\ref{eq1.1})}. In fact, there exists $u$ such that for every $s\in[0,T)$ equation \mbox{\rm(\ref{eq3.24})} holds true for a.e. $x\in\BR^d$.

\item Let $u_1,u_2$ be integral solutions such that for every $s\in[0,T)$ equation \mbox{\rm(\ref{eq3.24})} holds for a.e. $x$ with $u$ replaced by $u_1$ and also with $u$ replaced by $u_2$. Then $u_1(s,\cdot)=u_2(s,\cdot)$ a.e. for $s\in[0,T)$.
\end{enumerate}
\end{theorem}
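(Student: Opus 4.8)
The plan is to reduce both assertions to Theorem~\ref{th3.2}, the absolute continuity of the transition kernels being exactly what makes hypotheses assumed only a.e.\ sufficient. For $s\in[0,T)$ let $G_s$ be the set of those $x\in\BR^d$ for which $E|\varphi(X^{s,x}_T)|<\infty$ and $f$ satisfies (H3), (H4) with $X$ replaced by $X^{s,x}$; since $F_r$ is nondecreasing in $r$ it is enough to test (H3) along $r\in\BN$, so by assumption every $G_s$ has full Lebesgue measure, while $\{(s,x):x\in G_s\}$ is measurable because $(s,x,\omega)\mapsto X^{s,x}(\omega)$ is. For $x\in G_s$, Theorem~\ref{th2.2} applied to $\xi=\varphi(X^{s,x}_T)$ and the process $X^{s,x}$ yields the unique solution $(Y^{s,x},M^{s,x})$ of (\ref{eq3.2}). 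The crucial point is that the law of $X^{s,x}_t$ equals $p_{t-s}(x,\cdot)\,dy$, so $X^{s,x}_t\in G_t$ $P$-a.s.\ for every $t\in[s,T)$; this is what keeps the flow- and Markov-property arguments of Theorem~\ref{th3.2} available.

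For part (i) I would set $u(s,x)=EY^{s,x}_s$ for $x\in G_s$ and $u(s,x)=0$ otherwise; joint measurability of $u$ follows from the measurable dependence of the BSDE solution on the parameters. Fix $s$ and $x\in G_s$. Exactly as in the proof of Theorem~\ref{th3.2}(i)--(ii), uniqueness for (\ref{eq3.1}) gives $X^{t,X^{s,x}_t}_r=X^{s,x}_r$ on $[t,T]$ and, because $X^{s,x}_t\in G_t$ a.s., uniqueness for (\ref{eq3.2}) yields $Y^{s,x}_t=Y^{t,X^{s,x}_t}_t=u(t,X^{s,x}_t)$, $P$-a.s., for every $t\in[s,T)$. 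Hence $Y^{s,x}_r=u(r,X^{s,x}_r)$ for a.e.\ $r$, $P$-a.s., so by Fubini one may replace $Y^{s,x}_r$ by $u(r,X^{s,x}_r)$ in (\ref{eq3.2}); taking $t=s$, integrating in $P$, and rewriting the expectations through the transition densities turns the identity into (\ref{eq3.24}) (the integrals converge absolutely since $x\in G_s$ and the BSDE solution satisfies the usual a priori estimates). Thus (\ref{eq3.24}) holds for every $s$ and every $x\in G_s$, i.e.\ for a.e.\ $x$; since $\{(s,x):x\in G_s\}$ is measurable with full sections, (\ref{eq3.24}) also holds $dt\otimes dx$-a.e., so $u$ is an integral solution.

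For part (ii), let $u_1,u_2$ be integral solutions for which (\ref{eq3.24}) holds, for every $s$, for a.e.\ $x$, and fix $s$ and an $x$ in the full-measure set on which $x\in G_s$ and (\ref{eq3.24}) holds for both $u_1$ and $u_2$. For $i=1,2$ I would follow the proof of Theorem~\ref{th3.2}(iii). By Fubini and absolute continuity, $r\mapsto u_i(r,X^{s,x}_r)$ is defined $dr\otimes dP$-a.e.\ and $\varphi(X^{s,x}_T)+\int_s^Tf(r,X^{s,x}_r,u_i(r,X^{s,x}_r))\,dr$ is integrable, so the martingale
\[
\widetilde M^{s,x,i}_t:=E\Big(\varphi(X^{s,x}_T)+\int^T_sf(r,X^{s,x}_r,u_i(r,X^{s,x}_r))\,dr\,\Big|\,\FF^s_t\Big)-u_i(s,x),\quad t\in[s,T],
\]
is uniformly integrable; by the Markov property together with (\ref{eq3.23}) at $(t,X^{s,x}_t)$ (valid a.s., by absolute continuity) one gets $\widetilde M^{s,x,i}_t=\int_s^tf(r,X^{s,x}_r,u_i(r,X^{s,x}_r))\,dr+u_i(t,X^{s,x}_t)-u_i(s,x)$. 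Passing to a c\`adl\`ag version $M^{s,x,i}$ of $\widetilde M^{s,x,i}$ then shows that $t\mapsto u_i(t,X^{s,x}_t)$ has a c\`adl\`ag version $Y^{s,x,i}$ of class (D) and that $(Y^{s,x,i},M^{s,x,i})$ solves (\ref{eq3.2}). By uniqueness (Theorem~\ref{th3.2}(i)), $Y^{s,x,1}=Y^{s,x,2}$, whence $u_1(s,x)=Y^{s,x,1}_s=Y^{s,x,2}_s=u_2(s,x)$. As $x$ ranges over a set of full measure, $u_1(s,\cdot)=u_2(s,\cdot)$ a.e., for every $s\in[0,T)$.

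The main obstacle is not conceptual but the measure-theoretic bookkeeping: checking joint measurability of $u$ and of the various parameter-dependent quantities, verifying that the relevant exceptional sets are measurable with full sections so that Fubini applies, and --- the one genuinely essential use of the hypothesis --- justifying that a solution of (\ref{eq3.1}) started from a good point occupies a good point at every later deterministic time almost surely. Once that is in place, the arguments of Theorem~\ref{th3.2} carry over with a.e.\ versions of all the pointwise statements.
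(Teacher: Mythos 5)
Your proposal is correct and follows essentially the same route as the paper: reduce to the BSDE (\ref{eq3.2}) via Theorem \ref{th2.2}/\ref{th3.2}, use absolute continuity of the transition kernels to ensure that $X^{s,x}_t$ lands a.s.\ in the full-measure "good" set so that the flow identity (\ref{eq3.22}) and the Markov-property argument of Theorem \ref{th3.2}(iii) remain valid, and conclude uniqueness from uniqueness of the BSDE solution. The measure-theoretic bookkeeping you flag (measurability of $\{(s,x):x\in G_s\}$ and of $u$) is simply left implicit in the paper.
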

\begin{proof}
By Theorem \ref{th2.2}, for each $s\in[0,T)$ there exists a unique solution $(Y^{s,x},M^{s,x})$ of (\ref{eq3.2}) for a.e. $x$. Set $u(s,x)=EY^{s,x}_s$ for $(s,x)$ for which $Y^{s,x}$ is well defined.
Since the transition kernels of $\BX$ are absolutely continuous, (\ref{eq3.22}) holds for  $t\in[s,T)$.
Hence, for $s\in[0,T)$ and a.e. $x\in\BR^d$, we have
$Y^{s,x}_t=u(t,X^{s,x}_t)$ $P$-a.s. for $t\in[s,T]$, and consequently, for every $s\in[0,T)$  equality (\ref{eq3.26}) holds for a.e. $x\in\BR^d$. Taking now $t=s$ in (\ref{eq3.2}) and integrating with respect to $x$ we get  (i).
Fix $s\in[0,T)$. From the proof of Theorem \ref{th3.2}(iii)  it follows that for $i=1,2$ there is a set $N_i\subset\BR^d$ of Lebesque measure zero such that the processes $[s,T]\ni t\mapsto u_i(t,X^{s,x}_t)$  have c\`adl\`ag versions $\bar Y^{s,x,i}$ for a.e. $x\in\BR^d\setminus N_i$. By Theorem \ref{th2.2},
$\bar Y^{s,x,1}_s=\bar Y^{s,x,2}_s$  for $x\in\BR^d\setminus(N_1\cup N_2)$, which proves (ii).
\end{proof}

\subsection{Probabilistic solutions and BSDEs, L\'evy operators}
\label{sec3.3}

In this section, we confine ourselves to L\'evy operators, i.e we assume
that $a, b$ are constant  and $\gamma=I_d$ (i.e. $k=d$).
We shall see that in that case  probabilistic solutions are renormalized solutions or weak solutions depending on the integrability conditions imposed on $\varphi,f$. We also provide stability results for these kind of solutions.

In the case under consideration
$X^{s,x}$ is a L\'evy process starting at time $s$ from $x$ with the L\'evy triplet
$(b,a,\nu)$. Therefore the characteristic function of $X^{s,x}_t$ has the form
\[
Ee^{i(\xi,X^{s,x}_t)}= e^{i(\xi,x)}e^{-(t-s)\psi(\xi)},\quad \xi\in\BR^d,
\]
where  the characteristic exponent $\psi:\BR^d\rightarrow\BC$ is given by the celebrated L\'evy-Khintchine formula
\[
\psi(\xi)=-i(b,\xi)+\frac12(\xi,a\xi)
+\int_{\BR^d}(1-e^{i(\xi,y)}+i(\xi,y)\fch_{\{|y|\le1\}})\,\nu(dy).
\]
Note that here $\psi$ denotes the characteristic exponent of $X^{s,x}$ and not only of its pure jump part, i.e. $p(x,\xi)=\psi(\xi)$.
The generator of the semigroup $(P_t)$ generated by $\BX$ restricted to $C^{\infty}_c(\BR^d)$ can be written in the  form
\begin{equation}
\label{eq3.12}
Lu(x)=- \psi(D)u(x):=-\int_{\BR^d}e^{i(x,\xi)}\psi(\xi)\hat u(\xi)\,d\xi,\quad u\in C^{\infty}_c(\BR^d),
\end{equation}
which is the special case of (\ref{eq3.3}).
For a characteristic exponent  $\psi$ we consider the bilinear form
\[
\Psi(u,v)=\int_{\BR^d}\hat u(\xi)\overline{\hat v(\xi)}\psi(\xi)\,d\xi, \quad u,v\in H^{\psi,1}(\BR^d),
\]
where
\[
H^{\psi,1}(\BR^d)=\Big\{u\in L^2(\BR^d):\int_{\BR^d}(1+|\psi(\xi)|)|\hat u(\xi)|^2\,d\xi<\infty\Big\}.
\]

Note that the operator $L=-(-\Delta)^{\alpha/2}=:\Delta^{\alpha/2}$ considered in \cite{XCV} corresponds to $\psi(\xi)=|\xi|^{\alpha}$ and in this case the form $(\Psi,H^{\psi,1}(\BR^d))$ admits equivalent description as
\[
\Psi(u,v)=\frac{c_{d,\alpha}}{2}
	\int_{\BR^d\times\BR^d\setminus\Delta}
	\frac{(u(x)-u(y))(v(x)-v(y))}{|x-y|^{d+\alpha}}\,dx\,dy
	\]
for some constant  $c_{d,\alpha}>0$  (see, e.g., \cite{FOT} for more details).

Let $\EE$ be  the time-dependent Dirichlet form $\EE$ on $L^2(\BR\times\BR^d)$ associated with $\Psi$   in the sense described in \cite[(1.5)]{O1} or \cite[(6.1.8)]{O2} (see also \cite[(2.4)]{KR:NoD} or \cite[(2.2)]{KR:JEE}).  We define capacity Cap associated with $\EE$ and quasi-continuous functions as in \cite[Section 4]{O1} or \cite[Section 6.2]{O2}. Let $Q_T=(0,T)\times\BR^d$.
We say that some property holds quasi-everywhere (q.e. in abbreviation) in $Q_T$ if it holds in $Q_T$ except on a set whose capacity Cap equals zero.
For other notions of capacity which lead to  the same notion of exceptional subsets of $Q_T$,  i.e. subsets of capacity zero, we refer to \cite{KR:JEE}.

\begin{remark}
\label{rem3.8}
Let $\tilde B\subset Q_T$ be a Borel set.
By  remarks on p. 298 in  \cite{O1}, $\mbox{Cap}(\tilde B)=0$ if and only if
$I(\tilde B):=\int^T_0\!\!\int_{\BR^d}P(\exists t\in(0,T-s):(s+t,X^{s,x}_{s+t})\in\tilde  B)\,dx\,ds=0$. From this equivalence it follows that if the transition kernels of $\BX$ have strictly positive densities, then  for each $t\in(0,T)$ we have $I(\{t\}\times B)>0$ for any Borel set $B\subset\BR^d$ of positive Lebesque measure. Hence, if some property holds q.e. in $Q_T$, then under the above assumption on the transition kernels  it holds for a.e. $x\in \BR^d$ for each $s\in(0,T)$.
\end{remark}

\begin{definition}
\label{def3.7}
A quasi-continuous  function $u$ defined q.e.  on $Q_T$ is called a {\em generalized probabilistic solution}  of (\ref{eq1.1}) if (\ref{eq3.24}) holds for q.e. $(s,x)\in Q_T$ (in particular the integrals in (\ref{eq3.24}) are well defined 	for q.e. $(s,x)\in Q_T$).
\end{definition}

If the transition kernels of $\BX$ have strictly positive densities, then by Remark \ref{rem3.8}, generalized probabilistic solutions are integral solutions.

\begin{theorem}
\label{th3.9}
Let $E|\varphi(X^{s,x}_T)|<\infty$ for q.e. $(s,x)\in Q_T$, $f$ satisfy \mbox{\rm(H1), (H2)} and  \mbox{\rm(H3), (H4)} with $X$ replaced by $X^{s,x}$ for q.e. $(s,x)\in Q_T$. Then for q.e. $(s,x)\in Q_T$ there exists a unique generalized probabilistic solution of \mbox{\rm(\ref{eq1.1})}. It is unique in the sense that if  $u_1,u_2$ are two solutions, then $u_1=u_2$ q.e. on $Q_T$. Moreover, for q.e. $(s,x)\in Q_T$ there exists a unique solution $(Y^{s,x},M^{s,x})$ of \mbox{\rm(\ref{eq3.2})} and $Y^{s,x}_s=u(s,x)$ a.s. for q.e. $(s,x)\in Q_T$.
\end{theorem}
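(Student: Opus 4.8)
The plan is to transfer the pointwise (in $x$) existence/uniqueness statements already established for integral solutions to the quasi-everywhere setting, using the probabilistic characterization of capacity recalled in Remark \ref{rem3.8}. First I would note that, since we are dealing with L\'evy operators, the transition kernels of $\BX$ are translation invariant; the density structure and the description of $\mbox{Cap}$ via $I(\tilde B)$ reduce the problem to a statement about the process $X^{s,x}$ for suitable $(s,x)$. The key starting point is Theorem \ref{th2.2}: for each $(s,x)$ for which $E|\varphi(X^{s,x}_T)|<\infty$ and (H3), (H4) hold with $X$ replaced by $X^{s,x}$, there is a unique solution $(Y^{s,x},M^{s,x})$ of the BSDE \eqref{eq3.2}, and one sets $u(s,x)=EY^{s,x}_s$ on the (q.e.) set where this is defined. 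So the first step is: identify the exceptional set $N\subset Q_T$ outside of which the data hypotheses hold, observe $\mbox{Cap}(N)=0$ by assumption, and define $u$ on $Q_T\setminus N$.

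Second, I would establish the Markov-type consistency $Y^{s,x}_t=Y^{t,X^{s,x}_t}_t=u(t,X^{s,x}_t)$ $P$-a.s., exactly as in the proof of Theorem \ref{th3.2}, but now being careful that the identity $Y^{s,x}_t=Y^{t,X^{s,x}_t}_t$ only makes sense for those $(t,X^{s,x}_t)$ lying outside the exceptional set; this is precisely where the equivalence in Remark \ref{rem3.8} is used, since $I(N)=0$ guarantees that $P$-a.s.\ the space-time path $t\mapsto(s+t,X^{s,x}_{s+t})$ avoids $N$ for a.e.\ starting point. From this, Fubini gives that \eqref{eq3.26} holds (in the q.e.\ sense), and taking $t=s$ in \eqref{eq3.2} and integrating yields \eqref{eq3.24} for q.e.\ $(s,x)$, i.e.\ $u$ is an integral solution of \eqref{eq1.1} in the required pointwise-a.e.-for-each-$s$ sense. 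The quasi-continuity of $u$ should follow from the standard fact (via the time-dependent Dirichlet form $\EE$ and the capacity/exceptional-set machinery of \cite{O1,O2}) that $s\mapsto Y^{s,x}_s$, being the value at the starting point of the solution of the associated BSDE, admits a quasi-continuous version; alternatively one invokes that an integral solution coincides q.e.\ with a quasi-continuous function by the regularization results for these forms.

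Third, for uniqueness: if $u_1,u_2$ are two generalized probabilistic solutions, then by the proof of Theorem \ref{th3.2}(iii) applied for q.e.\ $(s,x)$, each $t\mapsto u_i(t,X^{s,x}_t)$ has a c\`adl\`ag version $Y^{s,x,i}$ solving \eqref{eq3.2}, and uniqueness in Theorem \ref{th2.2} forces $Y^{s,x,1}_s=Y^{s,x,2}_s$ $P$-a.s., hence $u_1(s,x)=u_2(s,x)$, for q.e.\ $(s,x)$; since both $u_i$ are quasi-continuous and agree outside a set of capacity zero, the standard fact that two quasi-continuous functions equal a.e.\ (q.e.) coincide q.e.\ finishes the argument. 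The last assertion, $Y^{s,x}_s=u(s,x)$ a.s.\ for q.e.\ $(s,x)$, is immediate from the definition $u(s,x)=EY^{s,x}_s$ together with the fact that $Y^{s,x}_s$ is $\FF^s_s$-measurable and hence deterministic.

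The main obstacle will be the careful bookkeeping of exceptional sets: one must ensure that the single set $N$ of capacity zero simultaneously controls the failure of the data hypotheses, the points where $u$ is undefined, and the points $(t,X^{s,x}_t)$ visited by the forward process, and that the identity $X^{t,X^{s,x}_t}_r=X^{s,x}_r$ together with the BSDE flow property \eqref{eq3.22} can be invoked only along paths that avoid $N$. The equivalence $\mbox{Cap}(\tilde B)=0\iff I(\tilde B)=0$ from Remark \ref{rem3.8} is exactly the tool that makes this work, and the proof is essentially a q.e.\ refinement of the proofs of Theorem \ref{th3.2} and its corollaries; establishing quasi-continuity of the constructed $u$ (rather than mere measurability) is the one genuinely new ingredient and relies on the Dirichlet-form framework of \cite{O1,O2}.
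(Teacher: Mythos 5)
The paper does not prove Theorem \ref{th3.9} directly: its proof is a one-line appeal to \cite[Theorem 5.8]{K:JFA}, so your plan, which tries to redo the argument as a q.e.\ refinement of Theorem \ref{th3.2}, is necessarily a different route --- and it is essentially the route along which the cited result is itself proved. The trouble is that your sketch leaves unproved exactly the two points that make the q.e.\ statement nontrivial. First, Remark \ref{rem3.8} characterizes $\mathrm{Cap}(\tilde B)=0$ by $I(\tilde B)=0$, i.e.\ by the statement that the space-time path $t\mapsto(s+t,X^{s,x}_{s+t})$ avoids $\tilde B$ for $ds\otimes dx$-a.e.\ starting point. That suffices to run your consistency/Fubini argument for a.e.\ $(s,x)$ (which is what the integral-solution theorem already delivers), but Theorem \ref{th3.9} asserts existence, the identification $Y^{s,x}_t=u(t,X^{s,x}_t)$, and uniqueness for \emph{q.e.} $(s,x)$. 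To start the flow argument from a q.e.\ point and to conclude equality q.e.\ you need the finer potential-theoretic fact that sets of capacity zero are not hit by the space-time process started from q.e.\ $(s,x)$ (polarity of exceptional sets for the time-space process attached to the time-dependent Dirichlet form); this belongs to the machinery of \cite{O1,O2} and \cite{K:JFA} and is not supplied by Remark \ref{rem3.8} alone, nor by your bookkeeping paragraph.

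Second, the quasi-continuity of $u(s,x)=EY^{s,x}_s$ is not a routine regularization remark: it is the substantive analytic ingredient of the cited theorem. In \cite{K:JFA} it is obtained by representing $u$ through space-time potentials/resolvents and invoking quasi-continuity of potentials of integrable functionals (compare \cite[Proposition 3.4]{K:JFA}, which this paper uses in Section \ref{sec5} for exactly this purpose); your proposal only gestures at ``standard regularization results,'' and without quasi-continuity neither membership in the class of Definition \ref{def3.7} nor the uniqueness clause ``$u_1=u_2$ q.e.'' is established. The remaining steps of your plan (uniqueness of the BSDE solution via Theorem \ref{th2.2}, triviality of $\FF^s_s$ giving $Y^{s,x}_s=u(s,x)$ a.s.) are fine. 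So the plan is a reasonable outline of how the cited result is proved, but as it stands it has genuine gaps at the q.e.\ upgrade and at quasi-continuity, which is precisely the content the paper delegates to \cite[Theorem 5.8]{K:JFA}.
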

\begin{proof}
This is a special case of \cite[Theorem 5.8]{K:JFA}.
\end{proof}

In many cases, one can show that probabilistic or generalized probabilistic solutions coincide with other notions of solutions considered in the theory of PDEs, for instance there are  viscosity solutions, weak solutions or renormalized solutions. As for viscosity solutions, we refer the reader to \cite{BBP}. Weak and renormalized solutions will be addressed in Section \ref{sec5}.

\section{Stability of probabilistic solutions}
\label{sec4}

For $n\ge1$ let $\sigma^n:\BR^d\rightarrow\BR^{d\times k}$, $b^n:\BR^d\rightarrow\BR^d$,  $\gamma^n:\BR^d\rightarrow\BR^{d\times k}$ be bounded Lipschitz continuous functions  and  $\nu_n$ be a  L\'evy measure on $\BR^d$. We set $a^n=\sigma^n\cdot(\sigma^n)^T$ and consider operators $L^n$ with symbols
\[
p_n(x,\xi)=-i(b^n(x),\xi)+\frac12(a^n(x)\xi,\xi)+\psi((\gamma^n)^T(x)\xi),\quad x,\xi\in\BR^d.
\]
In the special but important case where $d=k$ and $\gamma_n$ is the $d$-dimensional identity matrix $L^n$ has the form
\begin{align*}
L^nu_n(x)&=\sum^d_{i=1}b^n_i(x)\partial_{x_i}u_n(x)
+\frac{1}{2}\sum^d_{i,j=1}a^n_{ij}(x)\partial^2_{x_ix_j}u_n(x) \nonumber\\
&\quad+\int_{\BR^d}(u_n(x+y)-u_n(x)-y\cdot\nabla u_n(x)
\fch_{|y|\le1})\,\nu_n(dy).
\end{align*}
We denote by  $u_n$  the  probabilistic solution of the problem
\begin{equation}
	\label{eq4.3}
	\begin{cases}
		\partial_su_n+L^nu_n=-f(s,x,u_n),\quad (s,x)\in Q_T:=(0,T)\times\BR^d,\\
		u_n(T,x)=\varphi(x),\quad x\in\BR^d.
	\end{cases}
\end{equation}
It exists and is unique by Theorem \ref{th3.2}.

Since $N$ is  a pure-jump symmetric L\'evy process in $\BR^k$  with the characteristics $(0,0,\nu)$, the L\'evy--Khinchine formula says that the characteristic function of $N_t-N_s$ is of the form
\[
Ee^{i(\xi,N_t-N_s)}=e^{-(t-s)\psi(\xi)},\quad \xi\in\BR^k,
\]
for some  $\psi:\BR^k\rightarrow\BR$ called the characteristic exponent of $N$ (see Section \ref{sec5} for more details). Suppose that $N^n$ is another pure-jump  L\'evy processes. In what follows we denote its  characteristic exponent  by $\psi_n$.

\subsection{Stability  of solutions of SDEs}

We begin with  some stability results for solutions of SDEs of the form (\ref{eq3.18}). We will need the following assumptions:
\begin{equation}
\label{eq4.6}
\begin{cases}
a^n,b^n,\gamma^n\mbox{ are uniformly bounded}, \\
\sigma^n\rightarrow\sigma, \,b^n\rightarrow b,\,\gamma^n\rightarrow\gamma\mbox{ locally uniformly on }\BR^d
\end{cases}
\end{equation}
and
\begin{equation}
\label{eq4.4}
\lim_{n\rightarrow\infty}|\psi_n(\xi)-\psi(\xi)|=0,\quad \xi\in\BR^k.
\end{equation}

\begin{proposition}
\label{prop3.5}
Let $N,N^n$, $n\ge1$, be pure-jump  L\'evy processes and $W$ be a standard Wiener process independent of the processes $N,N^n$, $n\ge1$. Assume that
\mbox{\rm(\ref{eq4.4})} is satisfied.
Then there exists a probability space and processes $\tilde N,\tilde N^n$, $\tilde W^n$, $n\ge1$, defined on it such that $\tilde W^n$ is a standard Wiener process independent of $\tilde N,\tilde N^n$ for each $n\ge1$, and moreover,
\begin{equation}
\label{eq3.11}
(W,N)=_d(\tilde W,\tilde N),\qquad (W,N^n)=_d (\tilde W^n,\tilde N^n)
\end{equation}
and
\begin{equation}
\label{eq3.14}
(\tilde W^n, \tilde N^{n})\arrowpl(\tilde W,\tilde N)\mbox{ in } D([0,T];\BR^{d+k}),
\qquad \tilde\BF^n\arrowwl\tilde\BF\mbox{ on }[0,T],
\end{equation}
where $\tilde\BF$ (resp. $\tilde \BF^{n}$) is the standard augmentation of the natural filtration generated by $(\tilde W,\tilde N)$ (resp. $(\tilde W^n,\tilde N^{n})$).
\end{proposition}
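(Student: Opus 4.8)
The plan is to construct the required processes by combining a Skorokhod-type coupling for the Lévy parts with the observation that the Wiener process can be carried along almost unchanged. First I would record the key consequence of \mbox{\rm(\ref{eq4.4})}: pointwise convergence of the characteristic exponents $\psi_n\to\psi$ is equivalent to convergence of the one-dimensional characteristic functions $Ee^{i(\xi,N^n_t)}=e^{-t\psi_n(\xi)}\to e^{-t\psi(\xi)}=Ee^{i(\xi,N_t)}$ for every $t\ge0$ and $\xi\in\BR^k$, hence $N^n_t\arrowd N_t$ for each $t$. Since $N,N^n$ are Lévy processes (in particular have independent, stationary increments and are stochastically continuous), this finite-dimensional convergence upgrades to convergence in law of $N^n$ to $N$ in the space $D([0,T];\BR^k)$ equipped with the $J_1$-topology; this is a classical fact about convergence of Lévy processes (it follows, e.g., from the functional limit theorems in Jacod--Shiryaev, or may simply be cited). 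Because $W$ is fixed and independent of each $N^n$, the pairs $(W,N^n)$ converge in law in $D([0,T];\BR^{d+k})$ to $(W,N)$: the laws of $W$ are constant in $n$, hence tight, the laws of $N^n$ are tight by the above, so the joint laws are tight, and any subsequential weak limit must have $W$-marginal equal to the law of $W$, $N$-marginal equal to the law of $N$, and the two coordinates independent (independence passes to the limit), which pins down the limit as $(W,N)$.

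Next I would invoke the Skorokhod representation theorem. Since $D([0,T];\BR^{d+k})$ with $J_1$ is a Polish space, and $(W,N^n)\arrowd(W,N)$ there, there exists a probability space carrying random elements $(\tilde W^n,\tilde N^n)$ and $(\tilde W,\tilde N)$ with $(\tilde W^n,\tilde N^n)=_d(W,N^n)$, $(\tilde W,\tilde N)=_d(W,N)$, and $(\tilde W^n,\tilde N^n)\to(\tilde W,\tilde N)$ almost surely in $D([0,T];\BR^{d+k})$, hence in probability, which gives the first half of \mbox{\rm(\ref{eq3.14})} and also \mbox{\rm(\ref{eq3.11})}. From $(W,N^n)=_d(\tilde W^n,\tilde N^n)$ and the independence of $W$ and $N^n$ it follows that $\tilde W^n$ is a standard Wiener process independent of $\tilde N^n$; similarly $\tilde W$ is independent of $\tilde N$. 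One small point to check here: the statement also asks that $\tilde W^n$ be independent of $\tilde N$ (not only of $\tilde N^n$) for each $n$. This is not automatic from the coupling above, so at this stage I would need either to reorganize the coupling (e.g.\ couple only the $N$-parts and keep a single Wiener process, i.e.\ build $\tilde W^n=\tilde W$ for all $n$ by carrying $W$ along unchanged in the Skorokhod construction applied to the $N^n$-coordinates alone while using the independent copy of $W$) or to appeal to a transfer/extension lemma; I would take the former route, since $W$ plays no role in the convergence \mbox{\rm(\ref{eq4.4})}.

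The remaining and most delicate step is the weak convergence of filtrations $\tilde\BF^n\arroww\tilde\BF$ on $[0,T]$. Here I would apply the stability criterion for weak convergence of filtrations for processes with independent increments: by \cite{CMS} (and the companion results there on weak convergence of filtrations, cf.\ also Mémin's work), if $Z^n\arrowd Z$ in $D([0,T];\BR^m)$, $Z$ is a process with independent increments (a "PII"), and the $Z^n$ are quasi-left-continuous with independent increments, then the natural filtrations converge weakly, $\BF^{Z^n}\arroww\BF^{Z}$. Our $(\tilde W^n,\tilde N^n)$ are Lévy processes, hence have independent increments and are stochastically continuous (so quasi-left-continuous), $(\tilde W,\tilde N)$ is Lévy, and $(\tilde W^n,\tilde N^n)\arrowp(\tilde W,\tilde N)$; the passage to the standard augmentation does not affect weak convergence of filtrations. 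I expect this filtration-convergence step to be the main obstacle, mostly in the sense of correctly locating and citing the precise statement from \cite{CMS} (or from Coquet--Mémin--S\l omi\'nski and related papers) that covers processes with independent increments and of verifying the quasi-left-continuity / "predictable uniform tightness"-type hypothesis — for Lévy processes this is standard, but it needs to be stated cleanly. Once these pieces are assembled, \mbox{\rm(\ref{eq3.11})} and \mbox{\rm(\ref{eq3.14})} both hold and the proof is complete.
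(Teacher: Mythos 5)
Your proposal is correct and follows essentially the same route as the paper: deduce $N^n\arrowd N$ in $D([0,T];\BR^k)$ from convergence of the characteristic exponents (the paper cites Jakubowski--S\l omi\'nski, Corollary 2), pass to joint convergence of $(W,N^n)$, apply Skorokhod's representation theorem, and obtain the filtration convergence from the Coquet--M\'emin--S\l omi\'nski result for processes with independent increments (their Proposition 2); your device of keeping a single Wiener copy while coupling only the $N$-parts is a sensible refinement that actually secures the stated independence of $\tilde W^n$ from $\tilde N$, which the paper's construction (Skorokhod applied to the pairs) leaves implicit. One caution: tightness of the coordinates does not in general imply tightness of the pair in $D([0,T];\BR^{d+k})$ with the $J_1$ topology, but your argument is rescued here because the $W$-component is continuous (C-tight), so the joint convergence $(W,N^n)\arrowd(W,N)$ indeed holds.
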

\begin{proof}
Condition (\ref{eq4.4}) implies that $\lim_{n\rightarrow\infty}
\sup_{s\le t\le q} |e^{-(t-s)\psi_n(\xi)}-e^{-(t-s)\psi(\xi)}|=0$ for all $\xi\in\BR^d,q>0$, so by \cite[Corollary 2]{JS}, $N^{n}\arrowd N$ in $D([0,T];\BR^k)$.
Hence $(W,N^{n})\arrowd (W,N)$ in $D([0,T];\BR^{d+k})$. By Skorokhod's representation theorem
(see, e.g., \cite[Theorem 4.30]{K}),  there is a probability space with processes
$(\tilde W,\tilde N)$, $(\tilde W^n,\tilde N^{n})$  defined on it such
that (\ref{eq3.11}) and the first part of (\ref{eq3.14}) are satisfied. The processes $(W,N^{n})$ have independent increments, so the second part of (\ref{eq3.14}) follows from \cite[Proposition 2]{CMS}. 
\end{proof}

Let $\tilde W,\tilde W^n$ and $\tilde N,\tilde N^n$ be the processes of Proposition \ref{prop3.5}. For $(s,x)\in[0,T)\times\BR^d$ we denote by $\tilde X^{s,x}$ be the unique solution of  (\ref{eq3.1}) with $W,N$ replaced by $\tilde W,\tilde N$,  and by
$\tilde X^{s,x,n}$  the unique strong solution of the SDE
\[
\tilde X^{s,x,n}_t=x+\int^t_s\sigma^n(\tilde X^{s,x,n}_r)\,d\tilde W^n_r+\int^t_sb^n(X^{s,x,n}_r)\,dr
+\int^t_s\gamma^n(X^{s,x}_{r-})\,d\tilde N^{n}_r,\quad t\in[s,T].
\]

We can now state a useful  sufficient condition for hypotheses (C) to hold with $X^n$ repaced by $\tilde X^{s,x,n}$ and $X$ replaced by $\tilde X^{s,x}$.

\begin{proposition}
\label{prop4.2}
Assume that  \mbox{\rm (\ref{eq4.6}), (\ref{eq4.4})} are satisfied. Then
\begin{equation}
\label{eq4.7}
\tilde X^{s,x,n}\arrowpl\tilde X^{s,x}\quad\mbox{in }D([0,T];\BR^d).
\end{equation}
\end{proposition}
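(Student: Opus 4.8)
The plan is to reduce convergence in probability of the solutions of the SDEs to a stability theorem for stochastic differential equations driven by semimartingales, using the convergence of the driving processes supplied by Proposition \ref{prop3.5}. Write $\tilde Z^n_t=(t,\tilde W^n_t,\tilde N^n_t)$ and $\tilde Z_t=(t,\tilde W_t,\tilde N_t)$, so that $\tilde X^{s,x,n}$ and $\tilde X^{s,x}$ solve
\[
\tilde X^{s,x,n}_t=x+\int^t_s\Phi^n(\tilde X^{s,x,n}_{r-})\,d\tilde Z^n_r,\qquad
\tilde X^{s,x}_t=x+\int^t_s\Phi(\tilde X^{s,x}_{r-})\,d\tilde Z_r,
\]
where $\Phi^n,\Phi$ are the coefficient matrices built from $(b^n,\sigma^n,\gamma^n)$ and $(b,\sigma,\gamma)$ as in \eqref{eq3.18}. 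First I would record what \eqref{eq4.6} and \eqref{eq4.4} give us: by Proposition \ref{prop3.5}, $\tilde Z^n\arrowp\tilde Z$ in $D([0,T];\BR^{1+d+k})$, and the coefficients $\Phi^n$ are uniformly bounded, uniformly Lipschitz (the Lipschitz constants of $b^n,\sigma^n,\gamma^n$ need to be uniformly bounded — this should be part of the standing hypotheses, or can be assumed without loss of generality), and $\Phi^n\to\Phi$ locally uniformly, hence uniformly on compacts.

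Next I would invoke a stability result for SDEs driven by semimartingales. The natural reference is the theory of convergence of stochastic differential equations of Kurtz and Protter, or the treatment in Jacod--Shiryaev or Protter's book: if $\tilde Z^n\arrowp\tilde Z$ in the Skorokhod topology, the $\tilde Z^n$ are "good" sequences of semimartingales (here they have uniformly bounded characteristics — indeed $\tilde W^n$ is a Brownian motion and $\tilde N^n$ a L\'evy process with the convergence \eqref{eq4.4} controlling its characteristics, so the predictable characteristics of $\tilde Z^n$ converge and the uniform-tightness / $\mathbf{UT}$ condition holds), and $\Phi^n\to\Phi$ uniformly on compacts with $\Phi$ Lipschitz, then the solutions converge: $\tilde X^{s,x,n}\arrowp\tilde X^{s,x}$ in $D([0,T];\BR^d)$. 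The key points to check for applying such a theorem are: (a) the $\mathbf{UT}$ (uniform tightness) property of $\{\tilde Z^n\}$, which follows because they are sums of a drift with bounded variation uniformly bounded in $n$, a Brownian part, and a pure-jump L\'evy part whose L\'evy measures have uniformly bounded $\int(1\wedge|y|^2)\,\nu_n(dy)$ (a consequence of \eqref{eq4.4} via the L\'evy--Khinchine correspondence and the continuity theorem); (b) the joint convergence of $(\Phi^n(\cdot),\tilde Z^n)$, which reduces to $\tilde Z^n\arrowp\tilde Z$ together with uniform-on-compacts convergence of $\Phi^n$ and continuity of $\Phi$; and (c) uniqueness of the limiting SDE \eqref{eq3.1}, which holds since $\Phi$ is bounded Lipschitz.

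The main obstacle is verifying the hypotheses of the SDE-stability theorem cleanly in this jump setting — in particular, establishing the $\mathbf{UT}$ property of the driving sequence $\{\tilde Z^n\}$ and handling the fact that the coefficients $\Phi^n$ are themselves varying with $n$ (not fixed), which means one needs a version of the stability theorem allowing $n$-dependent coefficients that converge locally uniformly to a Lipschitz limit. Once those are in place the conclusion \eqref{eq4.7} is immediate. I would organize the proof as: (i) from \eqref{eq4.4} and the L\'evy continuity theorem deduce convergence of the characteristic triplets of $N^n$ and uniform bounds on $\int(1\wedge|y|^2)\,\nu_n(dy)$, hence $\mathbf{UT}$ for $\{\tilde N^n\}$ and thus for $\{\tilde Z^n\}$; (ii) combine with Proposition \ref{prop3.5} to get $\tilde Z^n\arrowp\tilde Z$; (iii) apply the SDE-stability theorem with varying coefficients (citing Kurtz--Protter / Jakubowski--M\'emin--Pag\`es type results) to conclude. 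A subtle point worth a remark: because the SDEs here are strong and the processes $\tilde X^{s,x,n}$ are defined as strong solutions on the common Skorokhod space, the convergence in probability (rather than merely in distribution) is legitimate, and this is exactly why Proposition \ref{prop3.5} was set up with the Skorokhod coupling in the first place.
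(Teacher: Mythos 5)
Your proposal is correct and takes essentially the same route as the paper: the paper obtains (\ref{eq4.7}) from the convergence in probability of the driving processes (the first part of (\ref{eq3.14}), i.e.\ Proposition \ref{prop3.5}) together with a known stability theorem for strong solutions of SDEs with coefficients converging locally uniformly, citing \cite[Corollary 3]{S} instead of the Kurtz--Protter/UT machinery you invoke. The UT/goodness verification and the treatment of $n$-dependent coefficients that you sketch are precisely what that cited result packages, so your argument is a more detailed version of the same one-line proof.
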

\begin{proof}
	Follows from  the first part of (\ref{eq3.14}) and \cite[Corollary 3]{S}.
\end{proof}

\subsection{Stability of solutions of PDEs}

We first give a general remark concernig applications of results of Section \ref{sec2} to the convergence of solutions of PDEs.

Fix $(s,x)\in[0,T)\times\BR^d$. Define $\tilde X^{s,x},\tilde X^{s,x,n}$ as in Proposition \ref{prop4.2} and denote by $\tilde Y^{s,x}$ (resp. $\tilde Y^{s,x,n}$)  the first component of the solution of  BSDE$(\varphi(\tilde X^{s,x}_T),\tilde X^{s,x},f,\tilde\BF)$ (resp. BSDE$(\varphi(\tilde X^{s,x,n}_T),\tilde X^{s,x,n},f,\tilde\BF^n)$).
By uniqueness, we have  
\begin{equation}
\label{eq4.23}
X^{s,x}=_d\tilde  X^{s,x},\qquad
X^{s,x,n}=_d\tilde  X^{s,x,n},\quad n\ge1.
\end{equation}
It follows that the probabilistic solution $u$ of (\ref{eq1.1}) satisfies the equation
\begin{equation}
	\label{eq4.8}
	u(s,x)=E\Big(\varphi(\tilde X^{s,x}_T)
	+\int^T_sf(t,\tilde X^{s,x}_t,u(t,\tilde X^{s,x}_t))\,dt\Big)
\end{equation}
and similarly, the probabilistic solution $u_n$ of (\ref{eq4.3}) satisfies
\begin{equation}
	\label{eq4.9}
	u_n(s,x)=E\Big(\varphi(\tilde X^{s,x,n}_T)
	+\int^T_sf(t,\tilde X^{s,x,n}_t,u_n(t,\tilde X^{s,x,n}_t))\,dt\Big).
\end{equation}
Since $E\tilde Y^{s,x}_s$ equals the right-hand side of (\ref{eq4.8}) and
$E\tilde Y^{s,x,n}_s$ equals the right-hand side of (\ref{eq4.9}), we see that pointwise convergence of $u_n$ to $u$ will be proved once we prove that
\begin{equation}
	\label{eq4.10}
	\tilde Y^{s,x,n}_s\arrowdl\tilde Y^{s,x}_s\quad\mbox{in }\BR.
\end{equation}
Similarly, the convergence of $u_n$ to $u$ in other topologies is related to (\ref{eq4.10}). Such idea of proving stability of solutions, but for linear equations and Neumann problem,  appeared in \cite{RS}.

To show (\ref{eq4.10}) we will use Theorem \ref{th2.4} with the  data
\begin{equation}
	\label{eq4.11}
	X=\tilde X^{s,x},\quad  X^n=\tilde X^{s,x,n},\quad \xi=\varphi(\tilde X^{s,x}_T),\quad \xi^n=\varphi(\tilde X^{s,x,n}_T).
\end{equation}

Below we present some sufficient conditions which together with (\ref{eq4.7})  imply   (H5)--(H7). As for $f$, we will assume that
\begin{equation}
\label{eq4.12}
|f(t,x,y)-f(t,x,0)|\le h(t,x)H(|y|)
\end{equation}
for some nonnegative measurable $h$  and nondecreasing $H:\BR_+\rightarrow\BR$. Examples of $f$ satisfying (H1), (H2) and (\ref{eq4.12}) are functions defined by (\ref{eq1.8}).
In the first result we require no additional assumptions on the underlying processes. The price is that we require that $\varphi$ is bounded and continuous and our assumptins on $f$, when applied to  (\ref{eq1.8}), say that $h_1,h_2$ are bounded.  Next we show a result  which covers integrable $\varphi$ and $f$ of the form (\ref{eq1.8}) with integrable $\varphi,h_1$ and essentially bounded $h_2$.

\begin{theorem}
\label{th4.3}
Assume that  $\varphi$ is bounded continuous, $f$ satisfies   \mbox{\rm(H1), (H2), (\ref{eq4.12})} with bounded $h$ and moreover  $f(\cdot,\cdot,0)$ is  bounded.
Let $u$ be the probabilistic solution of \mbox{\rm(\ref{eq1.1})} and $u_n$, $n\ge1$, be the probabilistic solution of \mbox{\rm(\ref{eq4.3})}.
If \mbox{\rm(\ref{eq4.6}), (\ref{eq4.4})} are satisfied, then
$u_n(s,x)\rightarrow u(s,x)$ for every $(s,x)\in[0,T)\times\BR^d$.
\end{theorem}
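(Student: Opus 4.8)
The plan is to verify the hypotheses of Theorem~\ref{th2.4} for the data specified in \mbox{\rm(\ref{eq4.11})}, and then read off \mbox{\rm(\ref{eq4.10})}, hence the desired pointwise convergence via the identities \mbox{\rm(\ref{eq4.8})}--\mbox{\rm(\ref{eq4.9})}. So fix $(s,x)\in[0,T)\times\BR^d$. First I would invoke Proposition~\ref{prop4.2}: under \mbox{\rm(\ref{eq4.6})} and \mbox{\rm(\ref{eq4.4})} we get $\tilde X^{s,x,n}\arrowp\tilde X^{s,x}$ in $D([0,T];\BR^d)$, which is the first half of (C); the second half, $\tilde\BF^n\arroww\tilde\BF$ on $[0,T]$, is exactly the second part of \mbox{\rm(\ref{eq3.14})} in Proposition~\ref{prop3.5} (note the filtrations $\tilde\BF^n,\tilde\BF$ carrying the BSDEs are the augmentations of those generated by $(\tilde W^n,\tilde N^n)$ and $(\tilde W,\tilde N)$). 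Hypotheses (H1), (H2) are assumed on $f$ directly. It therefore remains to check (H5), (H6), (H7).

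For (H7), $\xi^n=\varphi(\tilde X^{s,x,n}_T)\to\varphi(\tilde X^{s,x}_T)=\xi$: since $\varphi$ is bounded and continuous and $\tilde X^{s,x,n}_T\arrowp\tilde X^{s,x}_T$ (a consequence of $J_1$-convergence at the fixed endpoint $T$, or one passes to a subsequence converging a.s.), the continuous mapping theorem gives $\varphi(\tilde X^{s,x,n}_T)\arrowp\varphi(\tilde X^{s,x}_T)$, and boundedness upgrades this to $L^1(P)$-convergence by dominated convergence. For (H6): $f(\cdot,\cdot,0)$ is bounded, say by a constant $K$, so $|f(t,\tilde X^{s,x,n}_t,0)|\le K$ uniformly in $t,n,\omega$, and a uniformly bounded family is trivially uniformly $dt\otimes P$-integrable on the finite measure space $(0,T)\times\Omega$. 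For (H5): by \mbox{\rm(\ref{eq4.12})} with bounded $h$ (say $|h|\le H$) and $g$ nondecreasing, we have $F_r(t,\tilde X^{s,x,n}_t)=\sup_{|y|\le r}|f(t,\tilde X^{s,x,n}_t,y)-f(t,\tilde X^{s,x,n}_t,0)|\le H\,g(r)$, which is again a constant bound, uniform in $t,n,\omega$; hence the family $\{F_r(\cdot,\tilde X^{s,x,n}_\cdot)\}_n$ is uniformly integrable on $(0,T)\times\Omega$ for every $r>0$. This settles all the assumptions of Theorem~\ref{th2.4}, so $(\tilde Y^{s,x,n},\tilde M^{s,x,n})\arrowp(\tilde Y^{s,x},\tilde M^{s,x})$ in $D([0,T];\BR^2)$; evaluating the first coordinate at $t=s$ (a point of a.s.\ continuity can be avoided since $s$ is the initial time and $\tilde Y^{s,x,n}_s$, $\tilde Y^{s,x}_s$ are deterministic evaluations) yields \mbox{\rm(\ref{eq4.10})}, in fact $\tilde Y^{s,x,n}_s\arrowp\tilde Y^{s,x}_s$.

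Finally, I would convert this into the stated pointwise convergence of the PDE solutions. By Theorem~\ref{th3.2}(ii) and the distributional identities $X^{s,x}=_d\tilde X^{s,x}$, $X^{s,x,n}=_d\tilde X^{s,x,n}$, we have $u(s,x)=E\tilde Y^{s,x}_s$ and $u_n(s,x)=E\tilde Y^{s,x,n}_s$. The processes $\tilde Y^{s,x,n}_s$ are uniformly bounded: indeed, by Corollary~\ref{cor3.5} applied to $u_n$, $|u_n(s,x)|\le e^{\mu(T-s)}E(|\varphi(\tilde X^{s,x,n}_T)|+\int_s^T|f(t,\tilde X^{s,x,n}_t,0)|\,dt)\le e^{\mu T}(\|\varphi\|_\infty+T\,\|f(\cdot,\cdot,0)\|_\infty)$, and more relevantly the random variables $\tilde Y^{s,x,n}_s$ themselves inherit this bound from the a.s.\ estimate underlying Corollary~\ref{cor3.5} (the Meyer--Tanaka argument there is pathwise up to taking conditional expectation), so $\{\tilde Y^{s,x,n}_s\}_n$ is bounded in $L^\infty$, hence uniformly integrable. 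Combined with $\tilde Y^{s,x,n}_s\arrowp\tilde Y^{s,x}_s$ this gives $E\tilde Y^{s,x,n}_s\to E\tilde Y^{s,x}_s$, i.e.\ $u_n(s,x)\to u(s,x)$. Since $(s,x)$ was arbitrary, the theorem follows.

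The only genuinely delicate point is the passage from convergence in probability of the processes to convergence of the expectations $E\tilde Y^{s,x,n}_s$, which requires a uniform integrability bound on $\{\tilde Y^{s,x,n}_s\}_n$; the boundedness assumptions on $\varphi$ and $f(\cdot,\cdot,0)$ are exactly what make the a priori estimate of Corollary~\ref{cor3.5}-type uniform in $n$, so no compactness beyond (C) is needed. Everything else reduces to observing that the quantities in (H5)--(H6) are bounded by constants because $h$ and $f(\cdot,\cdot,0)$ are bounded, which makes the uniform-integrability conditions automatic; and (H7) is a routine continuous-mapping plus bounded-convergence argument. One should also take a moment to note explicitly, as remarked in the proof of Theorem~\ref{th2.4}, that (H5)+(H6) together with (H1) and (C) imply (H3)+(H4) for each $\tilde X^{s,x,n}$ (and for $\tilde X^{s,x}$), so that the probabilistic solutions $u_n$ indeed exist and are unique by Theorem~\ref{th3.2}, as already claimed after \mbox{\rm(\ref{eq4.3})}.
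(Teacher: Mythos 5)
Your proposal is correct and follows essentially the same route as the paper: fix $(s,x)$, verify (C) via Propositions \ref{prop3.5} and \ref{prop4.2}, note that boundedness of $\varphi$, $h$ and $f(\cdot,\cdot,0)$ makes (H5)--(H7) immediate, and conclude by Theorem \ref{th2.4} together with the identities \mbox{(\ref{eq4.8})}--\mbox{(\ref{eq4.9})}. The only difference is that you spell out the uniform-integrability step needed to pass from \mbox{(\ref{eq4.10})} to convergence of the expectations $E\tilde Y^{s,x,n}_s$, which the paper leaves implicit in the discussion preceding the theorem.
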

\begin{proof}
Fix $(s,x)\in[0,T)\times\BR^d$ and define $\xi^n,\xi$ and  $\tilde X^n,\tilde X$ by (\ref{eq4.11}).
Hypothesis (C) is satisfied by Proposition \ref{prop4.2}.
Clearly,  if $\varphi$ is bounded and continuous, then (H7) is satisfied and (\ref{eq4.12}) together with boundedness of $f(\cdot,\cdot,0)$ imply (H5), (H6).  Therefore (\ref{eq4.10}) follows from Theorem \ref{th2.4}, which proves the theorem.
\end{proof}

We shall see in Section \ref{sec6} that the following theorem, in which we impose additional assumptions on the underlying processes, is applicable in many interesting situations.

\begin{theorem}
\label{th4.4}
Let \mbox{\rm(\ref{eq4.6}), (\ref{eq4.4})} be satisfied and the transition kernels be absolutely continuous with respect to the Lebesgue measure on $\BR^d$. Assume that $\varphi\in L^q(\BR^d)$ for some $q\in(1,\infty]$, $f$ satisfies
\mbox{\rm(H1), (H2), (\ref{eq4.12})} with $h\in L^{\infty}(Q_T)$.	
Let $u$ be the integral solution of \mbox{\rm(\ref{eq1.1})} and $u_n$, $n\ge1$, be the integral solution of \mbox{\rm(\ref{eq4.3})}.
\begin{enumerate}[\rm(i)]
\item   If $f(\cdot,\cdot,0)\in L^{\infty}(Q_T)$ and the corresponding densities satisfy the assumption: for each $t\in(0,T]$,
\begin{equation}
\label{eq4.14}
\sup_{x,y\in\BR^d}p_t(x,y)<\infty,\qquad  \sup_{n\ge N} \sup_{x,y\in\BR^d}p^n_t(x,y)<\infty\quad\mbox{for some }N\ge1,
\end{equation}
then  $u_n(s,x)\rightarrow u(s,x)$ for every $(s,x)\in[0,T)\times\BR^d$.
			
\item If $f(\cdot,\cdot,0)\in L^q(Q_T)$ for some $q>1$ and
\begin{equation}
\label{eq4.15}
\int_{\BR^d}p_t(x,y)\,dx\le1,\qquad \int_{\BR^d}p^n_t(x,y)\,dx\le1,\quad t\in(0,T],\, y\in\BR^d,
\end{equation}
 then for each $s\in[0,T)$,  $u_n(s,\cdot)\rightarrow u(s,\cdot)$ in $L^1_{loc}(\BR^d)$.		
\end{enumerate}
\end{theorem}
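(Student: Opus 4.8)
The plan is to reduce both parts of the theorem to the abstract stability result Theorem \ref{th2.4}, applied along the lines already sketched before the statement of Theorem \ref{th4.3}: fix $(s,x)$, realize the driving processes $\tilde X^{s,x}$ and $\tilde X^{s,x,n}$ on a common probability space via Proposition \ref{prop3.5}/\ref{prop4.2}, so that hypothesis (C) holds, and then verify the integrability conditions (H5)--(H7) for the data (\ref{eq4.11}). Once (H5)--(H7) are checked, Theorem \ref{th2.4} yields $\tilde Y^{s,x,n}_s \arrowp \tilde Y^{s,x}_s$, hence $\tilde Y^{s,x,n}_s \arrowd \tilde Y^{s,x}_s$, which is exactly (\ref{eq4.10}); since $u_n(s,x) = E\tilde Y^{s,x,n}_s$ and $u(s,x) = E\tilde Y^{s,x}_s$, pointwise convergence will follow provided we also have uniform integrability of $\{\tilde Y^{s,x,n}_s\}$ in $n$ — and the latter is exactly what the a priori bound of Corollary \ref{cor3.5} combined with the extra density assumptions gives.

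For part (i), first I would use Corollary \ref{cor3.5} to get the uniform pointwise bound
\[
|u_n(s,x)| \le e^{\mu(T-s)} E\Big(|\varphi(\tilde X^{s,x,n}_T)| + \int_s^T |f(t,\tilde X^{s,x,n}_t,0)|\,dt\Big).
\]
Using the kernel representation and the first density bound in (\ref{eq4.14}), each term on the right is controlled by $\|\varphi\|_{L^q}$ and $\|f(\cdot,\cdot,0)\|_{L^q}$ times a constant depending only on $\sup_{x,y}p^n_{t}(x,y)$, which by (\ref{eq4.14}) is bounded uniformly in $n\ge N$ (and $p^n$ is also in $L^\infty$ in the $y$-variable for fixed $t$, which via Hölder handles $q<\infty$; for $q=\infty$ it is immediate). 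This shows $\{u_n(s,x)\}_n$ is bounded and, more to the point, that $\{F_r(\cdot,\tilde X^{s,x,n}_\cdot)\}$ and $\{f(\cdot,\tilde X^{s,x,n}_\cdot,0)\}$ are uniformly $dt\otimes P$-integrable: the density bounds let us dominate $E\int_s^T \mathbf{1}_{\{|f(\cdot,\tilde X^{s,x,n}_\cdot,0)|>R\}}|f(\cdot,\tilde X^{s,x,n}_\cdot,0)|\,dt$ by $C\int_{Q_T}\mathbf{1}_{\{|f|>R\}}|f|$, which tends to $0$ as $R\to\infty$ uniformly in $n$ because $f(\cdot,\cdot,0)\in L^q(Q_T)\subset L^1_{loc}$ and the density bound confines the mass; here (\ref{eq4.12}) with $h\in L^\infty$ reduces (H5) to (H6)-type estimates. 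Finally (H7) follows from $\varphi\in L^q$, the convergence $\tilde X^{s,x,n}_T\arrowd \tilde X^{s,x}_T$ and the density bounds, using that $L^q$ functions can be approximated by bounded continuous ones and the remainder controlled uniformly. So all hypotheses of Theorem \ref{th2.4} hold, giving (\ref{eq4.10}), and the established uniform integrability upgrades convergence in distribution of $\tilde Y^{s,x,n}_s$ to convergence of expectations, i.e. $u_n(s,x)\to u(s,x)$.

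For part (ii) the same scheme applies but now $f(\cdot,\cdot,0)\in L^q(Q_T)$ with only $q>1$, $\varphi\in L^q(\BR^d)$, and the weaker stochastic-contraction bound (\ref{eq4.15}) replaces the uniform $L^\infty$ density bound. The point of $\int_{\BR^d}p^n_t(x,y)\,dx\le 1$ is that it controls $L^q$-norms in the $x$-variable after integrating the Feynman--Kac representation: by Minkowski's integral inequality and Hölder (with exponent $q$ and using $T<\infty$ to absorb the time integral),
\[
\Big(\int_{\BR^d}|u_n(s,x)|^q\,dx\Big)^{1/q} \le C\big(\|\varphi\|_{L^q(\BR^d)} + \|f(\cdot,\cdot,0)\|_{L^q(Q_T)}\big),
\]
uniformly in $n$ and $s$. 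This gives a uniform $L^q(\BR^d)$ bound on $\{u_n(s,\cdot)\}_n$; since $q>1$, along any subsequence we extract a further subsequence converging weakly in $L^q_{loc}$, and the uniform integrability in the $x$-variable (from the $L^q$ bound, $q>1$) combined with the pointwise convergence along the BSDE route — now applied for a.e. $x$, using Theorem \ref{th4.4}'s hypothesis that the relevant BSDEs have solutions for a.e. $x$ and Remark \ref{rem3.8}-type arguments — forces the limit to be $u(s,\cdot)$ and the convergence to be strong in $L^1_{loc}(\BR^d)$. A Vitali-type argument (uniform integrability plus convergence in measure, the latter coming from pointwise a.e. convergence of $u_n(s,x)\to u(s,x)$ which is established exactly as in part (i) but now only for a.e. $x$) delivers $u_n(s,\cdot)\to u(s,\cdot)$ in $L^1_{loc}$.

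The main obstacle, in both parts, is the verification of the uniform $dt\otimes P$-integrability conditions (H5), (H6) and the $L^1$-convergence (H7) for the data pulled back through the processes $\tilde X^{s,x,n}$: one must transfer integrability of $\varphi$ and $f(\cdot,\cdot,0)$ as functions on $\BR^d$ (resp. $Q_T$) into integrability uniform in $n$ of their compositions with the random processes, and this transfer is precisely where the density hypotheses (\ref{eq4.14}) and (\ref{eq4.15}) are indispensable — without them the pullback measures $\mathcal{L}(\tilde X^{s,x,n}_t)$ could concentrate or spread in an $n$-dependent way and no uniform domination would be available. A secondary subtlety in part (ii) is that, unlike part (i), convergence holds only for a.e. $x$ and must be promoted to the $L^1_{loc}$ statement via a uniform-integrability/Vitali argument rather than by dominated convergence; care is needed because the exceptional null sets depend on the solution, so one should invoke the a.e.-$x$ existence and uniqueness framework for integral solutions established in Section \ref{sec1}'s Theorem on integral solutions together with Remark \ref{rem3.8}.
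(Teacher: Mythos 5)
Your part (i) follows the paper's route: under (\ref{eq4.12}) with bounded $h$ and bounded $f(\cdot,\cdot,0)$, hypotheses (H5)--(H6) are immediate, and the only real work is (H7), which you, like the paper, obtain by approximating $\varphi\in L^q$ by bounded continuous functions and controlling the error terms via H\"older's inequality and the uniform density bounds (\ref{eq4.14}). One remark: the detour through ``uniform integrability of $\tilde Y^{s,x,n}_s$'' is unnecessary (and Corollary \ref{cor3.5} would not provide it anyway, being a deterministic bound); since $\FF^n_s$ is $P$-trivial, $\tilde Y^{s,x,n}_s=u_n(s,x)$ a.s., so the convergence in probability furnished by Theorem \ref{th2.4} already gives $u_n(s,x)\rightarrow u(s,x)$.

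Part (ii), however, has a genuine gap. You assert that $u_n(s,x)\rightarrow u(s,x)$ for a.e.\ $x$ ``exactly as in part (i)'', and feed this into a Vitali argument. But in part (ii) the only kernel assumption is (\ref{eq4.15}), i.e.\ $\int_{\BR^d}p^n_t(x,y)\,dx\le1$, an estimate integrated over the starting point $x$; it gives no uniform-in-$n$ control of $y\mapsto p^n_t(x,y)$ for a \emph{fixed} $x$. Hence, for fixed $x$, you cannot bound $E|(\varphi-\varphi_k)(\tilde X^{s,x,n}_T)|=\int_{\BR^d}|\varphi-\varphi_k|(y)\,p^n_{T-s}(x,y)\,dy$ by $C\|\varphi-\varphi_k\|_{L^q(\BR^d)}$ uniformly in $n$, so (H7) cannot be verified as in part (i), and the same obstruction blocks (H6) for $f(\cdot,\cdot,0)$ that is only in $L^q(Q_T)$. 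So the a.e.\ pointwise convergence of $u_n(s,\cdot)$ is not established, and with it the ``convergence in measure'' input of your Vitali step disappears (the uniform $L^q$ bound alone yields only weak compactness, not identification of the limit nor strong $L^1_{loc}$ convergence). The paper avoids exactly this: it truncates the data ($\varphi_k$ bounded continuous, $f^k$ as in (\ref{eq2.14})), applies the bounded-data convergence result to get pointwise convergence $u^k_n\rightarrow u^k$ for each fixed $k$, upgraded to $L^1_{loc}$ by the uniform bound (\ref{eq3.6}) (this is (\ref{eq4.13})), and then controls the truncation errors $u^k_n-u_n$ and $u^k-u$ \emph{uniformly in $n$} in $L^q(\BR^d)$ resp.\ $L^1_{loc}$ -- this is precisely where (\ref{eq4.15}) is used, after integrating the BSDE estimate of type (\ref{eq2.15}) in $x$, giving (\ref{eq3.9}) and (\ref{eq3.8}); the conclusion follows from the three-term decomposition $u_n-u=(u_n-u^k_n)+(u^k_n-u^k)+(u^k-u)$, not from a.e.\ convergence of $u_n$ itself. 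Note also that identifying the limit with the integral solution $u$ requires an argument (the step around (\ref{eq4.18}), using the Markov property and absolute continuity of the kernels), which your proposal passes over.
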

\begin{proof}
The proof of part (i) goes as the proof of Theorem \ref{th4.3}. The only difference is that we have to show (H7) with $\xi,\xi^n$ defined by (\ref{eq4.11})  and $\varphi\in L^q(\BR^d)$.
Let $\varphi\in L^q(\BR^d)$ with $q\in(1,\infty)$. By (\ref{eq4.23}), $E\varphi(X^{s,x}_T)=E\varphi(\tilde X^{s,x}_T)$, $E\varphi(X^{s,x,n}_T)=E\varphi(\tilde X^{s,x,n}_T)$, $n\ge1$.
To show that
\begin{equation}
	\label{eq4.16}
	\lim_{n\rightarrow\infty}E\varphi(X^{s,x,n}_T)=E\varphi(X^{s,x}_T)
\end{equation}
we choose a sequence
$\{\varphi_k\}$ of bounded continuous functions on $\BR^d$ such that $\varphi_k\rightarrow\varphi$ in $L^q(\BR^d)$. Clearly,
\begin{align*}
	E|\varphi(X^{s,x,n}_T)-\varphi(X^{s,x}_T)|&\le  E|\varphi(X^{s,x,n}_T)-\varphi_k(X^{s,x,n}_T)| +E|\varphi_k(X^{s,x,n}_T)-\varphi_k(X^{s,x}_T)| \\
	&\quad+ E|\varphi_k(X^{s,x}_T)-\varphi(X^{s,x}_T)|=:I^{n,k}_1+I^{n,k}_2+I^k_3.
\end{align*}
By H\"older's inequality and (\ref{eq4.14}), for $n\ge N$ we have
\begin{align*}
	I^{n,k}_1\le (E|\varphi-\varphi_k|(X^{s,x,n}_T)|^q)^{1/q}
	&=\Big(\int_{\BR^d}|\varphi-\varphi_k|^q(y)p^n_{T-s}(x,y)\,dy\Big)^{1/q}\\
	&\le C^{1/q}_{T-s}\|\varphi-\varphi_k\|_{L^q(\BR^d)}
\end{align*}
with some constant $C_{T-s}$ independent of $n$.
Hence $\lim_{k\rightarrow\infty}I^{n,k}_1=0$  uniformly in $n\ge1$.
Furthermore, by Proposition \ref{prop4.2}, $\varphi_k(\tilde X^{s,x,n}_T)\rightarrow\varphi_k(\tilde X^{s,x}_T)$ in probability as $n\rightarrow\infty$ and $\{\varphi_k(X^{s,x,n}_T)\}_{n\ge1}$ are uniformly integrable because
\[
E|\varphi_k(X^{s,x,n}_T)|^q=\int_{\BR^d}|\varphi_k(y)|^qp^n_{T-s}(x,y)\,dy
\le C_{T-s}\|\varphi_k\|^q_{L^q(\BR^d)}.
\]
Consequently, $\lim_{n\rightarrow\infty}I^{n,k}_2=0$ for each $k\ge1$. Clearly, we also have
$\lim_{k\rightarrow\infty}I^k_3=0$. As a result, (\ref{eq4.16}) holds true.	The prove in case $q=\infty$ is similar, so we omit it.

To show (ii), we define  $f^k$ by (\ref{eq2.14}) and consider the sequence $\{\varphi_k\}$ from the proof of part (i). Let $u^k$ (resp. $u^k_n$) be the probabilistic solution of (\ref{eq1.1}) (resp. (\ref{eq4.3})) with $\varphi$ replaced by $\varphi_k$ and $f$ replaced by $f^k$.
By part (i), $u^k_n\rightarrow u^k$ pointwise as $n\rightarrow\infty$, and by (\ref{eq3.6}), the functions  $u^k_n$ are bounded uniformly in $n$. Hence
\begin{equation}
\label{eq4.13}
u^k_n(s,\cdot)\rightarrow u^k(s,\cdot)\quad\mbox{in }L^1_{loc}(\BR^d).
\end{equation}
By (\ref{eq3.6}), we also have
\[
|u^k(s,x)|^q\le 2^qe^{\mu qT}E\Big(|\varphi_k(X^{s,x}_T)|^q
+(T-s)^{q-1}\int^T_s|f^k(t,X^{s,x}_t,0)|^{q}\,dt\Big).
\]
Integrating both sides of the above inequality with respect to $x$  and using (\ref{eq4.15}) yields
\begin{equation}
\label{eq4.19}
\int_{\BR^d}|u^k(s,x)|^q\,dx\le 2^q2^{\mu q T}(\|\varphi_k\|^q_{L^q(\BR^d)}
+T^{q-1}\|f^k(\cdot,\cdot,0)\|^q_{L^q(Q_T)}).
\end{equation}
It follows in particular that for every $s\in[0,T)$  the functions $u^k(s,\cdot)$ are uniformly integrable on compact subsets of $\BR^d$.
By H\"older's inequality,
\begin{equation}
\label{eq4.20}
E\int^T_sf(t,X^{s,x}_t,0)|\,dt
\le(T-s)^{(q-1)/q}\Big(E\int^T_s|f(t,X^{s,x}_t,0|^q\,dt\Big)^{1/q}.
\end{equation}
On the other hand, by  (\ref{eq4.15}) and Fubini's theorem,
\begin{equation}
\label{eq4.21}
\int_{\BR^d}\Big(E\int^T_s|f(t,X^{s,x}_t,0)|^q\,dt\Big)dx
\le\int^T_s\!\!\int_{\BR^d}|f(t,y,0)|^q\,dt\,dy,
\end{equation}
which is finite since $f(\cdot,\cdot,0)\in L^q(Q_T)$. Therefore for each $s\in[s,T)$ condition (H4)  is satisfied with $X^{s,x}$ for a.e. $x\in\BR^d$.
Therefore for each $s\in[0,T)$ there exists a unique solution of
BSDE$(\varphi(\tilde X^{s,x}_T),\tilde X^{s,x},f,\tilde\BF)$ for a.e $x\in\BR^d$. W denote by $\tilde Y^{s,x}$ its first component and we set
$u(s,x)=E\tilde Y^{s,x}_s$ for $(s,x)$ for which $\tilde Y^{s,x}$ is well defined
From the first part of (\ref{eq2.12}) applied to $\tilde Y^{s,x}$ and the
first component of the solution of BSDE$(\varphi_k(\tilde X^{s,x}_T),\tilde X^{s,x},f^k,\tilde\BF)$ it follows  
that for every $s\in[0,T)$,
\begin{equation}
\label{eq4.18}
u^k(s,x)\rightarrow u(s,x),\qquad u^k(s,x)\rightarrow E\Big(\varphi(X^{s,x}_T)+\int^T_sf(t,X^{s,x}_t,\tilde Y^{s,x}_t)\,dt\Big)
\end{equation}
for a.e. $x\in\BR^d$. But from the assumption that the transition kernels are absolutely continuous it follows that $\tilde Y^{s,x}_t=\tilde Y^{t,\tilde X^{s,x}_t}_t$ $P$-a.s. for $t\in[s,T)$. Hence, for $s\in[0,T)$ and a.e. $x\in\BR^d$ we have
$\tilde Y^{s,x}_t=u(t, \tilde X^{s,x}_t)$ $P$-a.s. for $t\in[s,T]$. As a result, in  (\ref{eq4.18}) we may replace $\tilde Y^{s,x}_t$ by $u(t,\tilde X^{s,x}_t)$, which together with (\ref{eq4.23}) shows that $u$ is the integral solution of (\ref{eq1.1}).
The first part of (\ref{eq4.18}) and  (\ref{eq4.19}) now imply  that
\begin{equation}
\label{eq3.8}
u^k(s,\cdot)\rightarrow u(s,\cdot)\quad\mbox{in }L^1_{loc}(\BR^d),
\end{equation}
As in the proof of  (\ref{eq2.15}), we obtain
\begin{align*}
	|u^k_n(s,x)-u_n(s,x)|&\le E|(\varphi_k-\varphi)(X^{s,x,n}_T)|\\
	&\quad+ E\int^T_s|f(t,X^{s,x,n}_t,0)|\fch_{(k,\infty)}(|f(t,X^{s,x,n}_t,0)|)\,dt.
\end{align*}
By this and (\ref{eq4.15}),  we have
\begin{align*}
	&\int_{\BR^d}|u^k_n(s,x)-u_n(s,x)|^q\,dx \nonumber\\
	&\qquad\le2^q\Big(\|\varphi_k-\varphi\|_{L^q(\BR^d)} +(T-s)^{q-1}\int^T_s\!  \int_{\BR^d}|f(t,y,0)|^q\fch_{\{|f(t,y,0)|>k\}}\,dt\,dy\Big).
\end{align*}
Hence, if $f(\cdot,\cdot,0)\in L^q(Q_T)$, then for each $s\in[0,T)$,
\begin{equation}
	\label{eq3.9}	
	\lim_{k\rightarrow\infty}\sup_{n\ge1}
	\|u^k_n(s,\cdot)-u_n(s,\cdot)\|_{L^q(\BR^d)}=0.
\end{equation}
Combining  (\ref{eq4.13}) with (\ref{eq3.8}) and (\ref{eq3.9}) we get (ii).
\end{proof}

\section{Weak and renormalized solutions}
\label{sec5}

In this section, as in Subsection \ref{sec3.3}, we confine ourselves to L\'evy operators. Recall that the classic  Hartman--Wintner  condition
\begin{equation}
\label{eq5.1}
\lim_{|\xi|\rightarrow\infty}
\frac{\mbox{Re}\,\psi(\xi)}{\ln(1+|\xi|)}=\infty
\end{equation}
is sufficient for the existence of transition densities of $\BX$. Furthermore, if (\ref{eq5.1}) is satisfied, then
\[
p_t(x,y)=(2\pi)^{-d/2}\int_{\BR^d}e^{-i(x-y,\xi)}e^{-t\psi(\xi)}\,d\xi.
\]
In particular, for every $t>0$,
\begin{equation}
	\label{eq5.2}
	\sup_{x,y\in\BR^d}p_t(x,y)\le(2\pi)^{-d/2}
	\int_{\BR^d}e^{-t\mathrm{Re}\,\psi(\xi)}\,d\xi,
\end{equation}
so the first part of (\ref{eq4.14}) is satisfied. The first part of (\ref{eq4.15}) is also satisfied since $\hat p_t(x,y):=p_t(x,y)$ is the transition density of the L\'evy process with the L\'evy triplet $(-b,a,\nu)$.
From the above it follows in particular, that if (\ref{eq5.1}) is satisfied and $\varphi\in L^q(\BR^d)$, $f(\cdot,\cdot,0)\in L^q(Q_T)$ for some $q\in[1,\infty)$, then
\begin{equation}
\label{eq5.3}
E|\varphi(X^{s,x}_T)|<\infty,\,\,\,(s,x)\in Q_T,\quad
E\int^T_s|f(t,X^{s,x}_t,0)|\,dt<\infty\,\mbox{ q.e. }(s,x)\in Q_T.
\end{equation}
Indeed, the first part of (\ref{eq5.3}) is a consequence of (\ref{eq4.14}) and H\"older's inequality.
As for the second part of  (\ref{eq5.3}), we note that by (\ref{eq4.20}) and (\ref{eq4.21}), $R^{0,T}|f(\cdot,\cdot,0)|<\infty$ a.e. in $Q_T$, where $R^{0,T}f(\cdot,\cdot,0)(s,x):=E\int^T_s|f(t,X^{s,x}_t,0)|\,dt$.  In fact,
$R^{0,T}|f(\cdot,\cdot,0)|<\infty$ q.e. in $Q_T$, because by \cite[Proposition 3.4]{K:JFA},  $R^{0,T}|f(\cdot,\cdot,0)|$ is quasi-continuous.

\subsection{Weak solutions}

Let $V=H^{\psi,1}(\BR^d)$, $\WW=\{u\in L^2(0,T;V):\partial_tu\in L^2(0,T;V')\}$, where $V'$ is the dual space of $V$. We denote by $\langle\cdot,\cdot\rangle$ the duality pairing between $V'$ and $V$ and by $D(0,T;L^2(\BR^d))$ the set of Borel functions on $(0,T]\times\BR^d$ such that the mapping $(0,T]\ni t\mapsto u(t):=u(t,\cdot)\in L^2(\BR^d)$ is c\`adl\`ag, i.e. right continuous with left limits.   

\begin{definition}
$u\in L^2(0,T; V)\cap D(0,T;L^2(\BR^d))$  is called a weak solution of (\ref{eq1.1}) (with $L$ defined by (\ref{eq3.12})) if for any $t\in(0,T]$ and $\eta\in\WW$,
	\begin{align*}
		&(u(t),\eta(t))_{L^2}+\int^T_t\langle\partial_s\eta(s),u(s)\rangle\,ds
		+\int^T_t\Psi(u(s),\eta(s))\,ds\\
		&\qquad =(\varphi,\eta(T))_{L^2(\BR^d)}
		+\int^T_t\!\!\int_{\BR^d}\eta(s,x)f(s,x,u(s,x))\,ds\,dx.
	\end{align*}
\end{definition}

\begin{proposition}
\label{prop5.3}
Assume that  $\varphi\in L^2(\BR^d)$,   $f$ satisfies  \mbox{\rm(H1)--(H2)}, $f(\cdot,\cdot,0)\in L^{2}(Q_T)$ and  \mbox{\rm(\ref{eq4.12})} is satisfied with $h\in L^{\infty}(Q_T)$ and $g(|y|)=|y|$. Moreover, assume that $\psi$ satisfies the Hartman--Wintner condition \mbox{\rm(\ref{eq5.1})}. Then the generalized probabilistic solution $u$ of \mbox{\rm(\ref{eq1.1})} is a weak solution.
\end{proposition}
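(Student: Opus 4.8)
The plan is to identify the generalized probabilistic solution $u$ with a weak solution by passing to the limit in approximations where the final datum $\varphi$ and the source $f(\cdot,\cdot,0)$ are bounded. First I would note that under the Hartman--Wintner condition \mbox{\rm(\ref{eq5.1})} the transition densities exist, and by \mbox{\rm(\ref{eq5.3})} the hypotheses of Theorem \ref{th3.9} are satisfied, so the generalized probabilistic solution $u$ exists uniquely q.e. and coincides with an integral solution. The known identification of probabilistic solutions with weak solutions in the bounded (or $L^2$) regime is available in the literature cited earlier (cf. \cite{K:JFA}, \cite{KR:NoD}, \cite{KR:JEE}): for $\varphi_k=T_k(\varphi)$ and $f^k$ defined as in \mbox{\rm(\ref{eq2.14})}, the probabilistic solution $u^k$ of the truncated problem belongs to $\WW\subset L^2(0,T;V)\cap D(0,T;L^2(\BR^d))$ and is a weak solution; this uses the time-dependent Dirichlet form machinery of \cite{O1},\cite{O2} together with the linear growth bound \mbox{\rm(\ref{eq4.12})} with $g(|y|)=|y|$ and $h\in L^\infty(Q_T)$, which guarantees $f(\cdot,\cdot,u^k(\cdot,\cdot))\in L^2(Q_T)$ once $u^k\in L^2(Q_T)$.

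Next I would establish uniform $L^2$-type bounds. From the a priori estimate \mbox{\rm(\ref{eq4.19})} (which is exactly the integrated version of Corollary \ref{cor3.5}, valid here because $\int_{\BR^d}\hat p_t(x,y)\,dx\le 1$) we get that $\{u^k(s,\cdot)\}_k$ is bounded in $L^2(\BR^d)$ uniformly in $s$ and $k$, hence $\{u^k\}$ is bounded in $L^\infty(0,T;L^2(\BR^d))$ and in $L^2(Q_T)$. Testing the weak formulation for $u^k$ with $u^k$ itself (or invoking the standard energy identity for weak solutions of such time-dependent Dirichlet form equations) and using the sector/coercivity property of $\Psi$ on $V=H^{\psi,1}(\BR^d)$ together with $\mathrm{Re}\,\Psi(u,u)=\int(1+\mathrm{Re}\,\psi(\xi))|\hat u(\xi)|^2\,d\xi - \|u\|_{L^2}^2$, one obtains a uniform bound on $\{u^k\}$ in $L^2(0,T;V)$; the linear growth of $f$ and Young's inequality absorb the source term. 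Consequently, along a subsequence, $u^k\rightharpoonup u_\ast$ weakly in $L^2(0,T;V)$ and (by the estimate $\|u^k(s,\cdot)\|_{L^2}\le C$ uniformly) weakly-$\ast$ in $L^\infty(0,T;L^2(\BR^d))$, for some $u_\ast\in L^2(0,T;V)$.

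Then I would pass to the limit in the weak formulation of $u^k$. The linear terms $(u^k(t),\eta(t))_{L^2}$, $\int_t^T\langle\partial_s\eta,u^k\rangle\,ds$, $\int_t^T\Psi(u^k,\eta)\,ds$ and $(\varphi_k,\eta(T))_{L^2}$ converge by weak convergence and $\varphi_k\to\varphi$ in $L^2$. For the nonlinear term I would use the pointwise convergence $u^k(s,x)\to u(s,x)$ q.e. (hence a.e., by Remark \ref{rem3.8} and strict positivity of the densities under \mbox{\rm(\ref{eq5.1})}) which comes from the first part of \mbox{\rm(\ref{eq2.12})}/\mbox{\rm(\ref{eq4.18})}, together with the domination $|f^k(s,x,u^k)|\le |f(s,x,0)|+h(s,x)|u^k(s,x)|$ and the $L^2(Q_T)$-bound on $u^k$, which via Vitali's theorem gives $f^k(\cdot,\cdot,u^k)\to f(\cdot,\cdot,u_\ast)$ in $L^1_{loc}$, and in fact the uniform $L^2$ bound on the $f^k(\cdot,\cdot,u^k)$ upgrades this to weak $L^2(Q_T)$ convergence, which suffices against $\eta\in\WW\subset L^\infty\cdot$ — here one uses continuity \mbox{\rm(H1)} of $f$ in $y$ to identify the limit. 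This shows $u_\ast$ is a weak solution. Finally, since $u^k(s,x)\to u(s,x)$ a.e. and also $u^k\to u_\ast$ in $L^2(Q_T)$ along the subsequence, we get $u_\ast=u$ a.e.; together with the fact (from the Dirichlet form theory, e.g. \cite[Theorem 5.8]{K:JFA}) that $u$ is quasi-continuous and the weak solution has a quasi-continuous $D(0,T;L^2)$-representative, the two coincide q.e., so $u$ itself is the weak solution.

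\medskip

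The main obstacle will be the passage to the limit in the nonlinear term and the identification of the limit as $f(\cdot,\cdot,u)$ rather than merely some weak limit: this requires combining the q.e./a.e. pointwise convergence of $u^k$ (which rests on the monotone convergence in \mbox{\rm(\ref{eq2.12})} and the positivity of densities under Hartman--Wintner) with the uniform $L^2(Q_T)$ bound to apply a Vitali-type argument, and keeping careful track of the quasi-continuous versions so that the final statement holds q.e. and not just a.e. A secondary technical point is deriving the uniform $L^2(0,T;V)$ bound, which needs the coercivity of $\Psi$ on $H^{\psi,1}(\BR^d)$ and the linear-growth assumption on $f$; this is routine once the energy identity for the truncated weak solutions is in hand.
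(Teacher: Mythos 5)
Your proposal does contain the paper's essential quantitative step: squaring the estimate (\ref{eq3.6}) of Corollary \ref{cor3.5} (valid a.e.\ for each $s$ by Remark \ref{rem3.8}), integrating in $x$ and using (\ref{eq4.15}) to obtain the bound (\ref{eq5.4}) on $\|u(s)\|_{L^2(\BR^d)}$, whence $f(\cdot,\cdot,u)\in L^2(Q_T)$ by (\ref{eq4.12}) with $g(|y|)=|y|$ and $h\in L^\infty(Q_T)$. But the paper stops exactly there: once $\varphi\in L^2(\BR^d)$ and $f_u\in L^2(Q_T)$ are verified, it invokes the general identification theorem \cite[Theorem 3.7]{K:JFA}, which states that the generalized probabilistic solution with such data is a weak solution. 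Your truncation-and-limit scheme is in effect an attempt to re-prove that identification, and as written it has two genuine gaps. First, it is essentially circular: even after replacing $\varphi$ by $T_k(\varphi)$ and $f$ by $f^k$ from (\ref{eq2.14}), the composed nonlinearity $f^k(\cdot,\cdot,u^k)$ is pointwise bounded but $Q_T$ has infinite measure, so it is only an $L^2(Q_T)$ function; hence the assertion that $u^k$ is a weak solution is not a softer ``bounded-regime'' fact but the very $L^2$-data identification result you are trying to avoid. If you are willing to cite that result, you can apply it to $u$ directly and the entire approximation layer becomes superfluous.

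Second, the uniform $L^2(0,T;V)$ energy bound is not justified as stated. Testing the weak formulation with $u^k$ itself requires $u^k\in\WW$ (or a Steklov-averaging argument), which you have not established; more seriously, $\mathrm{Re}\,\Psi(v,v)=\int_{\BR^d}\mathrm{Re}\,\psi(\xi)|\hat v(\xi)|^2\,d\xi$ controls only the real part of the symbol, whereas the norm of $V=H^{\psi,1}(\BR^d)$ involves $1+|\psi(\xi)|$. The Hartman--Wintner condition (\ref{eq5.1}) constrains only $\mathrm{Re}\,\psi$, and one can have $|\mathrm{Im}\,\psi(\xi)|$ of linear growth (drift plus asymmetric jump part) while $\mathrm{Re}\,\psi$ grows barely faster than logarithmically, so the energy identity does not yield an $L^2(0,T;V)$ bound without an additional sector-type condition relating $|\psi|$ to $1+\mathrm{Re}\,\psi$. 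Also, the inclusion $\WW\subset L^\infty$ you invoke for the nonlinear term is false; what you actually need (and what suffices) is weak $L^2(Q_T)$ convergence of $f^k(\cdot,\cdot,u^k)$ paired with $\eta\in\WW\subset L^2(Q_T)$. In short: keep your derivation of (\ref{eq5.4}) and of $f_u\in L^2(Q_T)$, and conclude by citing \cite[Theorem 3.7]{K:JFA}, as the paper does, rather than by the limiting procedure.
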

\begin{proof}
By Theorem \ref{th3.9} and the remarks following (\ref{eq5.2})  there exists a unique generalized solution of (\ref{eq1.1}) and $u(s,x)=Y^{s,x}_s$ a.s. for q.e. $(s,x)\in Q_T$, where $(Y^{s,x},M^{s,x})$ is the unique solution of
(\ref{eq3.2}). Therefore we can repeat the proof of Corollary \ref{cor3.5} to get (\ref{eq3.6}) for q.e. $(s,x)$, and hence, by Remark \ref{rem3.8}, for a.e. $x\in\BR^d$ for each $s\in (0,T)$.
Therefore, for each $s\in(0,T)$ we have
\[
|u(s,x)|^2\le 2e^{2\mu T}E\Big(|\varphi(X^{s,x}_T)|^2 +(T-s)\int^T_s|f(t,X^{s,x}_t,0)|^2\,dt\Big)
\]
for a.e. $x\in\BR^d$. Integrating with respect to $x$ and using  (\ref{eq4.15}) we get
\begin{equation}
\label{eq5.4}
\|u(s)\|^2_{L^2(\BR^d)}\le2e^{2\mu T}(\|\varphi\|^2_{L^2(\BR^d)}
+(T-s)\|f(\cdot,\cdot,0)\|^2_{L^2((s,T)\times\BR^d)})
	\end{equation}
for $s\in(0,T)$. It follows that if $f(\cdot,\cdot,0)\in L^2(Q_T)$ and (\ref{eq4.12}) with $g(|y|)=|y|$ is satisfied, then   $f(\cdot,\cdot,u)\in L^2(Q_T)$. Hence, by a general result proved in  \cite[Theorem 3.7]{K:JFA}, $u$ is a weak solution of (\ref{eq1.1}).
\end{proof}

\subsection{Renormalized solutions}

In what follows $T_k$, $k>0$,  is the truncation operator defined by $T_k(y)=((-k)\vee y)\wedge k$ and $\MM_{0,b}(Q_T)$ is the space of signed Borel measures on $Q_T$ of finite total variation which do not charge Borel measures of capacity Cap zero. We denote by $\|\mu\|_{TV}$ the total variation norm of a bounded (signed) Borel measure $\mu$ on $Q_T$.

As in \cite{K:JFA} we call  a  Borel measurable function $u$ defined q.e. on $Q_T$ quasi-c\`adl\`ag if for q.e. $(s,x)\in  Q_T$ the proces $[s,T]\ni t\mapsto u(t,X^{s,x}_t)$ is  c\`adl\`ag.

Following \cite{KR:NoD} we adopt the following definition of renormalized solution of (\ref{eq1.1}). In the case of local operators, this is essentially
\cite[Definition 4.1]{PPP}.

\begin{definition}
We say that a quasi-c\`adl\`ag  $u$ defined q.e. on $Q_T$  is a renormalized solution of (\ref{eq1.1}) if
\begin{enumerate}
\item[(a)] $f_u:=f(\cdot,\cdot,u(\cdot,\cdot))\in L^1(Q_T;dt\otimes dx)$ and $T_ku\in L^2(0,T;V)$ for every $k>0$,
		
\item[(b)] There exists a sequence $\{\nu_k\}\subset\MM_{0,b}(Q_T)$ such that
$\|\nu_{k}\|_{TV}\rightarrow0$ as $k\rightarrow\infty$ and for
every $k\in\BN$ and every  bounded quasi-continuous  $\eta\in\WW$ such that $\eta(0)=0$ we have
\begin{align*}
&\int^T_0\langle\partial_t \eta(t),T_ku(t)\rangle\,dt +\int^T_0\Psi(T_ku(t),\eta(t))\,dt \\
&\qquad=(T_k\varphi, \eta(T))_{L^2(\BR^d)} +\int_{Q_T}\eta f_u\,dt\,dx
+\int_{Q_T}\eta\,d\nu_k.
\end{align*}
	\end{enumerate}
\end{definition}

Note that by \cite[Corollary 4.8]{KR:NoD}, if (H2) is satisfied, then there exists at most one renormalized solution of (\ref{eq1.1}).

\begin{proposition}
\label{prop5.6}
Assume that  $\varphi\in L^1(\BR^d)$,   $f$ satisfies \mbox{\rm (H1)--(H2)},
$f(\cdot,\cdot,0 )\in L^1(Q_T)$ and  \mbox{\rm(\ref{eq4.12})} is satisfied with $h\in L^{\infty}(Q_T)$. Moreover, assume that
$\psi$ satisfies the Hartman--Wintner condition \mbox{\rm(\ref{eq5.1})}. Then the generalized probabilistic solution $u$ of \mbox{\rm(\ref{eq1.1})} is a renormalized solution. 	
\end{proposition}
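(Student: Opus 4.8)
The plan is to follow the scheme of the proof of Proposition \ref{prop5.3}, replacing the $L^2$-theory of weak solutions by the $L^1$-theory of renormalized solutions. First I would verify the hypotheses of Theorem \ref{th3.9}. Hypotheses (H1) and (H2) are assumed. Hypothesis (H3) with $X$ replaced by $X^{s,x}$ holds for every $(s,x)\in Q_T$: by (\ref{eq4.12}), $F_r(t,x)\le h(t,x)g(r)$, so $F_r(t,X^{s,x}_t)\le\|h\|_{L^\infty(Q_T)}\,g(r)$ and $E\int^T_sF_r(t,X^{s,x}_t)\,dt\le\|h\|_{L^\infty(Q_T)}\,g(r)(T-s)<\infty$. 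Hypothesis (H4) with $X$ replaced by $X^{s,x}$ is exactly (\ref{eq5.3}), which by the discussion following (\ref{eq5.2}) holds for q.e. $(s,x)\in Q_T$, since $\psi$ satisfies the Hartman--Wintner condition and $\varphi\in L^1(\BR^d)$, $f(\cdot,\cdot,0)\in L^1(Q_T)$. Hence Theorem \ref{th3.9} gives a unique generalized probabilistic solution $u$ of (\ref{eq1.1}), and, for q.e. $(s,x)\in Q_T$, a unique solution $(Y^{s,x},M^{s,x})$ of (\ref{eq3.2}) with $Y^{s,x}_s=u(s,x)$ a.s.

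Next I would establish the a priori bounds that feed the renormalization procedure. Repeating the argument of Corollary \ref{cor3.5} (legitimate because $u(s,x)=Y^{s,x}_s$ a.s. for q.e. $(s,x)$ and $(Y^{s,x},M^{s,x})$ solves (\ref{eq3.2})), as in the proof of Proposition \ref{prop5.3}, gives (\ref{eq3.6}) for q.e. $(s,x)$, hence, by Remark \ref{rem3.8}, for a.e. $x$ and each $s\in(0,T)$; integrating over $x\in\BR^d$ and using (\ref{eq4.15}) together with $\int_{\BR^d}p_t(x,y)\,dx=1$ (valid since $\BX$ is a L\'evy process) yields
\[
\int_{\BR^d}|u(s,x)|\,dx\le e^{\mu T}\bigl(\|\varphi\|_{L^1(\BR^d)}+\|f(\cdot,\cdot,0)\|_{L^1(Q_T)}\bigr),\qquad s\in[0,T),
\]
so $u\in L^\infty(0,T;L^1(\BR^d))$ and, since $|T_ku|^2\le k|u|$, also $T_ku\in L^\infty(0,T;L^2(\BR^d))$ for every $k>0$. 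To see that $f_u:=f(\cdot,\cdot,u)\in L^1(Q_T)$, I would first reduce to $\mu=0$ by Remark \ref{rem2.5}; then (H2) forces $f(t,x,\cdot)-f(t,x,0)$ to take the sign opposite to its argument, so applying the It\^o--Tanaka formula to $|Y^{s,x}|$ (whose jump corrections are nonnegative by convexity) and taking expectations gives, for q.e. $(s,x)$,
\[
E\int^T_s|f(t,X^{s,x}_t,Y^{s,x}_t)|\,dt\le E|\varphi(X^{s,x}_T)|+2E\int^T_s|f(t,X^{s,x}_t,0)|\,dt;
\]
integrating over $x\in\BR^d$ with $s=0$ and once more using $\int_{\BR^d}p_t(x,y)\,dx=1$ then shows $f_u\in L^1(Q_T)$ (un-transforming preserves this, as $u\in L^1(Q_T)$).

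With $u$, the representation $u(s,x)=Y^{s,x}_s$ valid q.e., $f_u\in L^1(Q_T)$ and $T_ku\in L^\infty(0,T;L^2(\BR^d))$ in hand, I would conclude by invoking the $L^1$-theory of renormalized solutions of \cite{KR:NoD} (which plays here the role that \cite[Theorem 3.7]{K:JFA} plays in Proposition \ref{prop5.3}; cf. also \cite{K:JFA}): it supplies the sequence $\{\nu_k\}\subset\MM_{0,b}(Q_T)$ with $\|\nu_k\|_{TV}\to0$ and the weak identity required in item (b) of the definition. Concretely, $\nu_k$ is produced by applying the It\^o--Tanaka formula to $T_k(Y^{s,x})$ along the Markov process $X^{s,x}$ and averaging over $(s,x)$ as in Theorem \ref{th3.2}(iii): the local times of $Y^{s,x}$ at the levels $\pm k$, together with the tails $f_u\fch_{\{|u|\ge k\}}$, assemble into an element of $\MM_{0,b}(Q_T)$ whose total variation is controlled by $\|f_u\fch_{\{|u|\ge k\}}\|_{L^1(Q_T)}$ plus local-time mass, both tending to $0$ as $k\to\infty$ because $f_u\in L^1(Q_T)$. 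The step I expect to be the main obstacle is item (b), and within it the energy regularity $T_ku\in L^2(0,T;V)$ and the decay $\|\nu_k\|_{TV}\to0$: these do not follow from the probabilistic estimates above but rest on the time-dependent Dirichlet form and quasi-sure analysis carried out abstractly in \cite{KR:NoD}, so that the work left in the present setting is precisely the hypothesis check and the a priori bounds of the first two paragraphs.
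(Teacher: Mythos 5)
Your proposal is correct and follows essentially the same route as the paper: verify the hypotheses of Theorem \ref{th3.9} using the Hartman--Wintner condition and (\ref{eq5.3}), derive the $L^1$ bounds on $u$ (via (\ref{eq3.6}) and Remark \ref{rem3.8}) and on $f_u$ (via the $\mu$-transform of Remark \ref{rem2.5}, the sign structure from (H2) and the Meyer--Tanaka formula, then integrating against the transition densities using (\ref{eq4.15})), and finally delegate the renormalized-solution conclusion, including the energy regularity $T_ku\in L^2(0,T;V)$ and the measures $\nu_k$, to \cite[Theorem 4.5]{KR:NoD} -- exactly as the paper does.
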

\begin{proof}
By the argument from the beginning of the proof of Proposition \ref{prop5.3}, for each $s\in (0,T)$ and a.e. $x\in\BR^d$ there exists the unique solution $(Y^{s,x},M^{s,x})$ of (\ref{eq3.2}) and the uniqe generalized solution has the representation $u(s,x)=Y^{s,x}_s$ a.s.
For $(s,x)$ for which the solution of (\ref{eq3.2}) exists  define $(\tilde Y^{s,x},\tilde M^{s,x})$ as in the proof of Corollary \ref{cor3.5}.
Since $\tilde f$ satisfies (H2) with $\mu=0$ we have
\begin{align*}
\int^T_s|\tilde f(t,X^{s,x}_t,\tilde Y^{s,x}_t)|\,dt
&\le \int^T_s|\tilde f(t,X^{s,x}_t,\tilde Y^{s,x}_t)-\tilde f(t,X^{s,x}_t,0)|\,dt
+\int^T_s|\tilde f(t,X^{s,x}_t,0)|\,dt\\
&=-\int^T_s\mbox{sgn}(\tilde Y^{s,x}_t)(\tilde f(t,X^{s,x}_t,\tilde Y^{s,x}_t)-\tilde f(t,X^{s,x}_t,0))\,dt\\
&\quad+\int^T_s|\tilde f(t,X^{s,x}_t,0)|\,dt.
\end{align*}
By the Meyer--Tanaka formula,
\begin{align*}
E|\tilde Y^{s,x}_T|-E|\tilde Y^{s,x}_s|\ge E\int^T_s\mbox{sgn}(\tilde Y^{s,x}_{t-})\,d\tilde Y^{s,x}_t
		=-E\int^T_s\mbox{sgn}(\tilde Y^{s,x}_{t-})\tilde f(t,X^{s,x}_t,\tilde Y^{s,x}_t))\,dt.
	\end{align*}
	Hence
	\begin{align*}
	E\int^T_s|\tilde f(t,X^{s,x}_t,\tilde Y^{s,x}_t)|\,dt&\le e^{\mu(T-s)}E|\varphi(X^{s,x}_T)|+2E\int^T_s|\tilde f(t,X^{s,x}_t,0)|\,dt\\
	&\le e^{\mu(T-s)}E|\varphi(X^{s,x}_T)|
	+2E\int^T_se^{\mu(t-s)}|f(t,X^{s,x}_t,0)|\,dt.
	\end{align*}
On the other hand,
\[
E\int^T_s|f(t,X^{s,x}_t,Y^{s,x}_t)|\,dt\le E\int^T_se^{-\mu(t-s)}|\tilde f(t,X^{s,x}_t,\tilde Y^{s,x}_t)|\,dt +\mu E\int^T_s|Y^{s,x}_t|\,dt.
\]
Combining the above two estimates and then taking  $s=0$ and integrating with respect to $x$  we get	
\begin{equation}
\label{eq5.5}
\|f_u\|_{L^1(Q_T)}\le e^{\mu T}(\|\varphi\|_{L^1(\BR^d)}
+2\|f(\cdot,\cdot,0)\|_{L^1(Q_T)})+\mu \|u\|_{L^1(Q_T)}.
	\end{equation}
Furthermore, since for each $s\in(0,T)$ estimate  (\ref{eq3.6}) holds for a.e. $x\in\BR^d$, we see that
\begin{equation}
\label{eq5.7}
\|u(s)\|_{L^1(\BR^d)}\le e^{\mu(T-s)}(\|\varphi\|_{L^1(\BR^d)}+\|f(\cdot,\cdot,0)\|_{L^1(Q_T)})
\end{equation}
for $s\in(0,T)$. Hence $f_u\in L^1(Q_T)$.
That $u$ is a renormalized solution now follows from  \cite[Theorem 4.5]{KR:NoD}.
\end{proof}

It is worth noting that as compared with Proposition \ref{prop5.3}, in Proposition \ref{prop5.6} we do not assume that (\ref{eq4.12}) is satisfied with $g(|y|)=|y|$.
It follows in particular that  Proposition \ref{prop5.6} covers equations with nonlinearities given in (\ref{eq1.8}).

\subsection{Stability results}

We start with a simple observation. W know that (\ref{eq5.1}) implies (\ref{eq5.2}). Therefore if  $\psi,\psi_n$ satisfy the Hartman--Wintner condition and condition \mbox{\rm(\ref{eq4.4})}, and moreover there is $N\ge1$ such that for every $t\in(0,T]$,
\begin{equation}
\label{eq5.6}
\sup_{n\ge N}\int_{\BR^d}e^{-t\mathrm{Re}\,\psi_n(\xi)}\,d\xi<\infty,
\end{equation}
then (\ref{eq4.14}) is satisfied (the estimate for $p_t$ follows by Fatou's lemma).

\begin{proposition}
Let $\varphi,f$ satisfy the assumptions of Proposition \ref{prop5.3} and 	$\psi,\psi_n$ satisfy the  Hartman--Wintner condition \mbox{\rm(\ref{eq5.1})} and conditions  \mbox{\rm(\ref{eq4.4}), (\ref{eq5.6})}.  Then $u, u_n$  are weak solutions and $u_n\rightarrow u$ weakly in $L^2(Q_T)$.
\end{proposition}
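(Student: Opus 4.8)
The plan is to combine the pointwise stability of probabilistic solutions (Theorem~\ref{th4.3}, or rather its $L^q$-data refinement in Theorem~\ref{th4.4}(i)) with the uniform $L^2(Q_T)$-bound coming from Corollary~\ref{cor3.5}, and then upgrade pointwise-a.e.\ convergence of a bounded-in-$L^2$ sequence to weak convergence. First I would note that, under the Hartman--Wintner condition for $\psi$ and each $\psi_n$ together with \mbox{\rm(\ref{eq4.4})} and \mbox{\rm(\ref{eq5.6})}, the densities $p_t,p^n_t$ exist and satisfy both \mbox{\rm(\ref{eq4.14})} and \mbox{\rm(\ref{eq4.15})} (the former by the observation preceding the proposition, via \mbox{\rm(\ref{eq5.2})} and Fatou; the latter because the time-reversed kernels are again L\'evy transition densities). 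Hence Theorem~\ref{th4.4}(i) applies and gives $u_n(s,x)\to u(s,x)$ for every $(s,x)\in[0,T)\times\BR^d$, and Proposition~\ref{prop5.3} guarantees that $u$ and all $u_n$ are genuine weak solutions.

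Next I would establish a uniform bound $\sup_{n}\|u_n\|_{L^2(Q_T)}<\infty$. This is exactly estimate \mbox{\rm(\ref{eq5.4})} applied to $u_n$ in place of $u$: since $\varphi\in L^2(\BR^d)$ and $f(\cdot,\cdot,0)\in L^2(Q_T)$ and the parameter $\mu$ in (H2) is the same for all $n$, we get $\|u_n(s)\|^2_{L^2(\BR^d)}\le 2e^{2\mu T}(\|\varphi\|^2_{L^2(\BR^d)}+(T-s)\|f(\cdot,\cdot,0)\|^2_{L^2(Q_T)})$ for every $s$, hence, integrating in $s$, a bound on $\|u_n\|_{L^2(Q_T)}$ independent of $n$. (One uses here that \mbox{\rm(\ref{eq4.15})} holds for the kernels $p^n$, which was checked above.) Thus $\{u_n\}$ is bounded in the Hilbert space $L^2(Q_T)$, so it has a subsequence converging weakly in $L^2(Q_T)$ to some limit $v$.

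Finally I would identify every weak-$L^2$ cluster point with $u$. Given a weakly convergent subsequence $u_{n_j}\rightharpoonup v$ in $L^2(Q_T)$, the already-established pointwise convergence $u_{n_j}\to u$ a.e.\ on $Q_T$ forces $v=u$: indeed, for any ball $B\subset\BR^d$ and $[a,b]\subset(0,T)$, the sequence $\{u_{n_j}\}$ is bounded in $L^2((a,b)\times B)$ and converges a.e., so by the Vitali/Egorov argument (or: a.e.\ limit of an $L^2$-bounded sequence agrees with its weak limit on sets of finite measure) we get $v=u$ a.e.\ on $(a,b)\times B$, hence a.e.\ on $Q_T$. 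Since the full sequence $\{u_n\}$ is bounded in $L^2(Q_T)$ and every weakly convergent subsequence has the same limit $u$, the whole sequence converges: $u_n\rightharpoonup u$ weakly in $L^2(Q_T)$.

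\emph{Main obstacle.} The only genuinely delicate point is verifying the hypotheses of Theorem~\ref{th4.4}(i) --- in particular that \mbox{\rm(\ref{eq4.14})} holds \emph{uniformly in $n$}; this is where assumption \mbox{\rm(\ref{eq5.6})} is used, and it is the reason that hypothesis cannot be dropped. Everything after that (the uniform $L^2$ bound and the ``a.e.\ limit $=$ weak limit on finite-measure sets'' step) is standard functional analysis.
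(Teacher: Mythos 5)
Your proposal follows essentially the same route as the paper: Proposition \ref{prop5.3} gives that $u,u_n$ are weak solutions, (\ref{eq5.6}) yields (\ref{eq4.14}) so Theorem \ref{th4.4} gives a.e.\ convergence, the uniform bound (\ref{eq5.4}) applied to $u_n$ gives weak relative compactness in $L^2(Q_T)$, and the a.e.\ convergence identifies every weak cluster point with $u$. Your write-up is in fact slightly more detailed than the paper's (e.g.\ spelling out the identification of weak limits and the verification of (\ref{eq4.15})), but the argument is the same.
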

\begin{proof}
By Proposition \ref{prop5.3}, $u$ is a weak solution of (\ref{eq1.1}) and $u_n$ is a weak solution of  (\ref{eq4.3}). Since (\ref{eq5.6}) implies (\ref{eq4.14}), applying Theorem \ref{th4.4} shows that $u_n\rightarrow u$ a.e. in $Q_T$. On the other hand, by (\ref{eq5.4}), $\{u_n\}_{n\ge1}$ is bounded in $L^2(Q_T)$. Therefore it is  weakly relatively compact in $L^2(Q_T)$ which together with the a.e. convergence shows the desired result.
\end{proof}

In case  $L=\Delta^{\alpha/2}$ weak convergence $u_n\rightarrow u$ in $L^2(Q_T)$ was proved in \cite[Theorem 4.2]{XCV} by completely different methods and with additional assumption \cite[(2.7)]{XCV}.

\begin{proposition}
Let $\varphi\in L^1(\BR^d)\cap L^q(\BR^d)$ for some $q\in(1,\infty]$, $f$ satisfy the assumptions of Proposition \ref{prop5.6} and
$\psi,\psi_n$ satisfy the  Hartman--Wintner condition \mbox{\rm(\ref{eq5.1})} and conditions  \mbox{\rm(\ref{eq4.4}), (\ref{eq5.6})}.  Then $u, u_n$  are renormalized  solutions,   $\sup_{n\ge1}\|u_n\|_{L^1(Q_T)}<\infty$, 
$u\in L^1(Q_T)$ and $u_n\rightarrow u$ a.e. in $Q_T$.
\end{proposition}
\begin{proof}
That $u,u_n$ are renormalized solutions follows from Proposition \ref{prop5.6}. Applying Theorem \ref{th4.4} shows that $u_n\rightarrow u$ a.e in $Q_T$. That $\{u_n\}$ is uniformly bounded in $L^1(Q_T)$ and $u\in L^1(Q_T)$ follows from (\ref{eq5.7}).
\end{proof}

\section{Applications}
\label{sec6}

We will give examples of applications of  our general convergence result stated in Theorems \ref{th4.3} and \ref{th4.4}. Clearly, in the case  of L\'evy operators, (\ref{eq4.15}) is satisfied when the transition densities exist. We  shall see that  in many  interesting cases one can easily check (\ref{eq4.4}) and
(\ref{eq4.14}). Therefore, in these cases, one can apply the convergence result to equations with $\varphi$, $f$ satisfying various  assumptions of Theorem \ref{th4.4}, in particular to weak and renormalized solutions considered in Propositions \ref{prop5.3} and \ref{prop5.6}.
In the case of  more general pseudodifferential operators of the form
(\ref{eq3.3}) the situation is quite different. It is known that in some interesting situations there exist transition densities (see, e.g., \cite{KK}), but the problem is whether, for instance, the second part of (\ref{eq4.14}) holds true. For this reason, in the case of  general operators  (\ref{eq3.3}) our results apply to equations with bounded continuous $\varphi$ and bounded $f(\cdot,\cdot,0)$.

In what follows $I_k$ denotes the identity matrix of size $k$.

\subsection{L\'evy operators}

The following sufficient condition for  the Hartman--Wintner conditon and (\ref{eq5.6}) is often easy to check.
Suppose that  there is a characteristic exponent $\tilde \psi:\BR^d\rightarrow\BR$  satisfying  (\ref{eq5.1}) such that for some $N\ge1$ we have
\begin{equation}
\label{eq6.1}
\mathrm{Re}\,\psi_n(\xi)\ge\tilde \psi(\xi),\quad n\ge N,
\end{equation}
for all sufficiently large $|\xi|$. Clearly,  for each $n\ge N$, the function $\psi_n$ satisfies the Hartman--Wintner  condition. Furthermore, since $\tilde\psi$ satisfies (\ref{eq5.1}),  there is $M\ge1$ such that if  $|\xi|\ge M$, then
\[
\exp(-(T-s)\tilde\psi(\xi))
=\exp\Big(-(T-s)\ln|\xi|\frac{\tilde\psi(\xi)}{\ln|\xi|}\Big)
\le\exp(-\kappa\ln|\xi|)
\]
for some constant $\kappa>d$. Therefore from (\ref{eq6.1}) it follows that for a sufficiently large $M$ we have  $\exp(-(T-s)\mathrm{Re}\,\psi_n(\xi))\le|\xi|^{-\kappa}$ if $|\xi|\ge M$ if $n\ge N$. Hence
\[
\int_{|\xi|\ge M}e^{-(T-s)\mathrm{Re}\,\psi_n(\xi)}\,d\xi
\le\int_{\{|\xi|\ge M\}}|\xi|^{-\kappa}\,d\xi =C(M,d)\int^{\infty}_Mr^{-\kappa+d-1}\,dr<\infty,
\]
for $n\ge N$, which implies (\ref{eq5.6}).

Below we provide examples of operators for which the corresponding characteristic exponents  $\psi,\psi_n$ satisfy (\ref{eq4.4}), (\ref{eq5.6}).

\begin{example}
\begin{enumerate}[(i)]
\item  (Fractional Laplace operator). Let $L=\Delta^{\alpha/2}$ for some $\alpha\in(0,2]$. If $\alpha\in(0,2)$, then $L=\II$ with $\nu(dy)=c_{d,\alpha}|y|^{-d-\alpha}$ for some constant $c_{d,\alpha}>0$, and if $\alpha=2$, then $L=\LL=\Delta$. The corresponding characteristic exponent, for $\alpha\in(0,2]$, is $\psi(\xi)=|\xi|^{\alpha}$, $\xi\in\BR^d$.
Let $L^n=\Delta^{\alpha_n/2}$ for some $\alpha_n\in(0,2)$. Then $\psi_n(\xi)=|\xi|^{\alpha_n}$. It is clear that if $\alpha_n\rightarrow \alpha$, then (\ref{eq4.4}), (\ref{eq5.6}) are satisfied.

\item  (Multifractal diffusion). Let
$L=\sum^N_{i=1}c^{(i)}\Delta^{\alpha^{(i)}}$, $L^n=\sum^N_{i=1}c^{(i)}_n\Delta^{\alpha^{(i)}_n}$ with constants $c^{(i)}, c^{(i)}_n\ge0$ and $\alpha^{(i)},\alpha^{(i)}_n\in(0,2]$. The corresponding
characteristic functions are of the form $\psi(\xi)=\sum^N_{i=1}c^{(i)}|\xi|^{\alpha^{(i)}}$, $\psi_n(\xi)=\sum^N_{i=1}c^{(i)}_n|\xi|^{\alpha^{(i)}_n}$, $\xi\in\BR^d$. Assume that $c^{(i)}_n\rightarrow c^{(i)}$, $\alpha^{(i)}_n\rightarrow\alpha^{(i)}$ for $i=1,\dots,N$. Then (\ref{eq4.4}) is satisfied. Furthermore, if $c^{(i)}>0$ for some $1\le i\le N$, then (\ref{eq6.1}) holds with $\tilde\psi(\xi)=
(c^{(i)}/2)|\xi|^{\alpha^{(i)}/2}$.

\item (Operator associated with the relativistic $\alpha$-stable process). Let $L=\psi(D)$ and $L^n=\psi_n(D)$.
\begin{enumerate}
\item $\psi(\xi)=(|\xi|^2+m^{2/\alpha})^{\alpha/2}-m$,
$\psi_n(\xi)=(|\xi|^2+m^{2/\alpha_n})^{\alpha_n/2}-m$, $\xi\in\BR^d$, for some $m>0$ and $\alpha,\alpha_n\in(0,2]$. If $\alpha_n\rightarrow\alpha$, then (\ref{eq4.4}), (\ref{eq6.1}) are satisfied. Note that $L=\Delta$ if $\alpha=2$.

\item $\psi(\xi)=|\xi|^{\alpha}$,
$\psi_n(\xi)=(|\xi|^2+m_n^{2/\alpha})^{\alpha/2}-m_n$, $\xi\in\BR^d$, for some $\alpha\in(0,2)$ and $m_n>0$.
If $m_n\rightarrow0$, then (\ref{eq4.4})  and (\ref{eq6.1}) are satisfied. Note that $L=\Delta^{\alpha/2}$.
\end{enumerate}
\end{enumerate}
\end{example}


\subsection{L\'evy-type operators}

Below we give examples of operators for which (\ref{eq4.6}), (\ref{eq4.4}) are satisfied and consequently  Theorem \ref{th4.3} is applicable.

\begin{example}
\begin{enumerate}[\rm(i)]
\item Let $\sigma=0$, $b=0$, $\gamma(x)=\bar\gamma(x)I_d$ for some bounded Lipschitz continuous  $\bar\gamma:\BR^d\rightarrow\BR$,  and let $N$ be a  $d$-dimensional symmetric $\alpha$-stable process.  Then
    \[
    X^{s,x,i}_t=x_i+\int^t_s\bar\gamma(X^{s,x}_{r-})\,dN^i_r,\quad t\in[s,T],
    \]
and  $\psi(\xi)=|\xi|^{\alpha}$, $\xi\in\BR^d$.  By  (\ref{eq3.20}),
$p(x,\xi)=\psi(\gamma^T(x)\xi)=|\bar\gamma(x)|^{\alpha}|\xi|^{\alpha}$, $x,\xi\in\BR^d$.
The generator with this symbol when restricted to $C^{\infty}_c(\BR^d)$ is of the form
\[
Lu(x)=|\bar\gamma(x)|^{\alpha}\Delta^{\alpha/2}u(x).
\]
Let $L^n$ correspond to the symbol $p_n(x,\xi)=|\bar\gamma^n(x)|^{\alpha_n}|\xi|^{\alpha_n}$ for some $\alpha_n\in(0,2)$ and bounded Lipschitz continuous $\bar\gamma^n:\BR^d\rightarrow\BR$.  Theorem \ref{th4.3}(i) applies in the case where  $\alpha_n\rightarrow\alpha$ and $\bar\gamma^n\rightarrow\bar\gamma$ locally uniformly in $\BR^d$.

\item Let $d=1$, $\sigma=0$, $b=0$ and $\gamma_{1j}(x)=\bar\gamma_j(x)$, $j=1,\dots,k$ for
some bounded Lipschitz continuous  $\bar\gamma_j:\BR\rightarrow\BR$. Assume that
$N^1,\dots,N^k$ are independent  and $N^j$, $j=1,\dots,k$, is a one-dimensional symmetric $\alpha^{(j)}$-stable processes.  We then have
\[
X^{s,x}_t=x+\sum^k_{j=1}\int^t_s\bar\gamma_j(X^{s,x}_{r-})\,dN^j_r,\quad t\in[s,T].
\]
The characteristic exponent of $N$ equals $\psi(\xi)=\sum^k_{j=1}|\xi_j|^{\alpha^{(j)}}$, $\xi\in\BR^k$. By (\ref{eq3.20}),
\[
p(x,\xi)=\psi(\gamma^T(x)\xi)
=\sum^{k}_{j=1}|\bar\gamma_j(x)|^{\alpha^{(j)}}|\xi|^{\alpha^{(j)}},\quad x,\xi\in\BR.
\]
The operator corresponding to the symbol $p$ has the form
\[
Lu(x)=\sum^{k}_{j=1}|\bar\gamma_j(x)|^{\alpha^{(j)}}
(\partial^2_{xx})^{\alpha^{(j)}/2}u(x), \quad u\in C^{\infty}_c(\BR).
\]
Let $L^n$ correspond to the symbol $\sum^{k}_{j=1}|\bar\gamma^n_j(x)|^{\alpha^{(j)}_n}|\xi|^{\alpha^{(j)}_n}$ with some $\alpha^{(j)}_n\in(0,2)$ and bounded Lipschitz continuous $\bar\gamma^n_j:\BR\rightarrow\BR$. If $\alpha^{(j)}_n\rightarrow\alpha^{(j)}$ and $\bar\gamma^n_j\rightarrow\bar\gamma_j$ locally uniformly for $j=1,\dots,k$, then
(\ref{eq4.6}), (\ref{eq4.4}) are satisfied.

\item Let  $d=k=1$, $\sigma,b\in\BR$. Assume that $\gamma:\BR\rightarrow\BR$ is a bounded Lipschitz continuous function and $Z_t=(t,W_t,N_t)$, where $N$ is a one-dimensional symmetric $\alpha$-stable process. Then
\[
p(x,\xi)
=-ib\xi+\frac{a}{2}|\xi|^2+|\gamma(x)|^{\alpha}|\xi|^{\alpha},\quad x,\xi\in\BR,
\]
with $a=\sigma^2$. The corresponding operator has the form
\[
Lu(x)= b\partial_xu(x)+\frac{a}{2}\partial^2_{xx}u(x)
+|\gamma(x)|^{\alpha}(\partial^2_{xx})^{\alpha/2}u(x),\quad u\in C^{\infty}_c(\BR).
\]
Let $L^n$ be the operator with symbol $p_n(x,\xi)=-ib_n\xi+\frac{a_n}{2}|\xi|^2+|\gamma(x)|^{\alpha}|\xi|^{\alpha}$, where $a_n\ge0,b_n\in\BR,\alpha_n\in(0,2)$ and $\gamma_n:\BR\rightarrow\BR$ is
bounded and Lipschitz continuous. If $a_n\rightarrow a$, $b_n\rightarrow b$ and $\alpha_n\rightarrow\alpha$  then clearly (\ref{eq4.6}) and  (\ref{eq4.4}) are satisfied.
\end{enumerate}
\end{example}

\end{document}